\title{On the Lower Bound of Minimizing Polyak-\L ojasiewicz Functions}
\author{%
  Pengyun Yue $\qquad$ Cong Fang $\qquad$ Zhouchen Lin \\
  School of Intelligence Science and Technology, Peking University\\
  \texttt{\{yuepy,fangcong,zlin\}@pku.edu.cn} \\
  % David S.~Hippocampus\thanks{Use footnote for providing further information
  %   about author (webpage, alternative address)---\emph{not} for acknowledging
  %   funding agencies.} \\
  % Department of Computer Science\\
  % Cranberry-Lemon University\\
  % Pittsburgh, PA 15213 \\
  % \texttt{hippo@cs.cranberry-lemon.edu} \\
  % examples of more authors
  % \And
  % Coauthor \\
  % Affiliation \\
  % Address \\
  % \texttt{email} \\
  % \AND
  % Coauthor \\
  % Affiliation \\
  % Address \\
  % \texttt{email} \\
  % \And
  % Coauthor \\
  % Affiliation \\
  % Address \\
  % \texttt{email} \\
  % \And
  % Coauthor \\
  % Affiliation \\
  % Address \\
  % \texttt{email} \\
}
\date{}
\newtheorem{theorem}{Theorem}
\newtheorem{lemma}{Lemma}
\newcommand{\R}{\mathbb{R}}
\newcommand{\N}{\mathbb{N}}
\newcommand{\Pdelta}{\mathcal {P}(\Delta, \mu, L)}
\newcommand{\supp}{\mathop{\mathrm{supp}}}
\newcommand{\x}{\mathbf{x}}
\newcommand{\y}{\mathbf{y}}
\newcommand{\z}{\mathbf{z}}
\newcommand{\vT}{v_{T,c}}
\newcommand{\bbT}{\mathbf{b}_{T,c}}
\newcommand{\bT}{b_{T,c}}
\newcommand{\xx}[1]{\mathbf{x}^{(#1)}}
\newcommand{\ee}[1]{\mathbf{e}^{(#1)}}
\newcommand{\EE}[1]{\mathbf{E}_{#1}}
\newcommand{\bb}{\mathbf{b}}
\newcommand{\bI}{\mathbf{I}}
\newcommand{\sA}{\mathsf{A}}
\newcommand{\bA}{\mathbf{A}}
\newcommand{\tg}{\tilde{g}}
\newcommand{\tx}{\tilde{\mathbf x}}
\newcommand{\tI}{\tilde{\mathbf I}}
\newcommand{\Afirst}{\mathcal{A}^{(1)}}
\newcommand{\Als}{\mathcal {A}_{\mathrm{zr}}}
\newcommand{\eps}{\varepsilon}
\newcommand{\argmin}{\mathop{\mathrm{argmin}}}
\newcommand{\argmax}{\mathop{\mathrm{argmax}}}
\newcommand{\<}{\left\langle}
\renewcommand{\>}{\right\rangle}
\newcommand{\modify}[1]{{#1}}
\newcommand{\vy}{v_{y}}
\newcommand{\by}{b_{y}}
\newcommand{\qtt}{q_{T, t}}
\newcommand{\bB}{\mathbf{B}}
\newcommand{\gtt}{g_{T, t}}
\newcommand{\tB}{\tilde{\mathbf{B}}}
\newcommand{\ty}{\tilde{\mathbf{y}}}
\newcommand{\tz}{\tilde{\mathbf{z}}}
\newcommand{\dominate}{\mathsf{Dominate}}
\newcommand{\true}{\mathrm{True}}
\newcommand{\false}{\mathrm{False}}
\newcommand{\Next}{\mathsf{Next}}
\newcommand{\UBindex}{\mathsf{UBIndex}}
\newcommand{\Dist}{\mathsf{Dist}}
\newcommand{\UB}{\mathrm{UB}}
\def\app{1}
\begin{document}

\maketitle

\begin{abstract}
Polyak-\L ojasiewicz (PL) \citep{POLYAK1963864} condition is a weaker condition than the strong convexity but suffices to ensure a global convergence for the Gradient Descent algorithm. In this paper, we study the lower bound of algorithms using first-order oracles to find an approximate optimal solution. We show that any first-order algorithm  requires at least  ${\Omega}\left(\frac{L}{\mu}\log\frac{1}{\eps}\right)$ gradient costs to find an  $\eps$-approximate optimal solution for  a general $L$-smooth function that has an $\mu$-PL constant. This result demonstrates the \textit{optimality} of the Gradient Descent algorithm to minimize smooth PL functions in the sense that there exists a ``hard'' PL function such that no first-order algorithm can be faster than Gradient Descent when ignoring a numerical constant. In contrast, it is well-known that the momentum technique, e.g.  \citet[chap.~2]{nesterov2003introductory}, can provably accelerate Gradient Descent to ${O}\left(\sqrt{\frac{L}{\hat{\mu}}}\log\frac{1}{\eps}\right)$ gradient costs for functions that are $L$-smooth and $\hat{\mu}$-strongly convex. Therefore, our result distinguishes the hardness of minimizing a smooth PL function and a smooth strongly convex function as the complexity of the  former cannot be improved by any polynomial order in general. 
  
\iffalse
  The abstract paragraph should be indented \nicefrac{1}{2}~inch (3~picas) on
  both the left- and right-hand margins. Use 10~point type, with a vertical
  spacing (leading) of 11~points.  The word \textbf{Abstract} must be centered,
  bold, and in point size 12. Two line spaces precede the abstract. The abstract
  must be limited to one paragraph.
  \fi
\end{abstract}

\section{Introduction}
We consider the problem
\begin{equation}
    \min_{\x\in \R^d} f(\x),
    \label{equ:problem}
\end{equation}
where the function $f$ is $L$-smooth and satisfies the Polyak-\L ojasiewicz condition. A
function $f$ is said to satisfy the Polyak-\L ojasiewicz condition if \eqref{equ:PLdef} holds for some $\mu>0$:

\begin{equation}
    \|\nabla f(\x)\|^2\ge 2\mu\left(f(\x) - \inf_{\y\in \R^d} f(\y) \right),\quad \forall \x\in \R^d.
    \label{equ:PLdef}
\end{equation}

We  refer to \eqref{equ:PLdef} as the $\mu$-PL condition and simply denote $\inf_{\y\in \R^d} f(\y)$ by $f^*$. The PL condition may be originally introduced by Polyak \modify{\citep{POLYAK1963864} and \L ojasiewicz \citep{lojasiewicz1963topological} independently.}
% , which holds for a function $f$ if 
% \begin{equation}\label{equ:Lojasiewicz}
%     \left|f(\x)-f(\y) \right|^\beta \le C\|\nabla f(\x)\|, \quad \forall \y\text{ in a neighborhood of }\x.
% \end{equation}
% since \eqref{equ:Lojasiewicz}  is equivalent to \eqref{equ:PLdef} when $\beta$ equals to $\frac12$
The PL condition is strictly weaker than strong convexity as one can show that any $\hat{\mu}$-strongly convex function \modify{which by definition satisfies}:
$$ f(\x) \geq f(\y) + \<\nabla f(\y), \x - \y \> + \frac{\hat{\mu}}{2} \left\|\x- \y \right\|^2 $$
is also  $\hat{\mu}$-PL by minimizing both sides with respect to $\x$ \citep{karimi2016linear}. However, the PL condition does not even imply convexity.  From a geometric view, the PL condition suggests that the sum of the squares of the gradient dominates the optimal function value gap, which implies that any local stationary point is a global minimizer. Because it is relatively easy to obtain an approximate local stationary point by first-order algorithms, the PL condition serves as an ideal and weaker alternative to strong convexity.

In machine learning, the PL condition has received wide attention recently. Lots of models are found to satisfy this condition under different regimes. Examples include, but are not limited to, matrix decomposition and linear neural networks under a  specific initialization \citep{hardt2016identity, li2018algorithmic}, nonlinear neural networks in the so-called neural tangent kernel regime \citep{liu2022loss}, reinforcement learning with linear quadratic regulator \citep{fazel2018global}. Compared with strong convexity, the PL condition is much easier to hold since the reference point in the latter only is a minimum point such that $\x^*=\argmin_{\y} f(\y)$, instead of any $\y$ in the domain. 

Turning to the theoretic side, it is known \citep{karimi2016linear} that the standard Gradient Descent algorithm admits a linear converge to minimize a $L$-smooth and $\mu$-PL function. To be specific, in order to find an $\eps$-approximate optimal solution $\hat{\x}$ such that $f(\hat{\x}) - f^* \le \eps$, Gradient Decent needs  $O(\frac{L}{\mu}\log\frac{1}{\eps})$ gradient computations.  However, it is still not clear whether there exist algorithms that can achieve a provably faster convergence rate. In the optimization community, it is perhaps well-known that the momentum technique, e.g. \citet[chap.~2]{nesterov2003introductory}, can provably accelerate Gradient Descent from $O(\frac{L}{\hat{\mu}}\log \frac{1}{\eps})$ to $O\left(\sqrt{\frac{L}{\hat{\mu}}}\log \frac{1}{\eps}\right)$ for  functions that are $L$-smooth and $\hat{\mu}$-strongly convex. Even though some works \citep{j2016proximal, lei2017non} have considered accelerations under different settings, probably faster convergence of first-order algorithms for PL functions is still not obtained up to now.

In this paper,  we study the first-order complexities to minimize a generic smooth PL function and ask the question:

\textit{``Is the Gradient Decent algorithm (nearly) optimal  or can we design a much faster algorithm?''}

We answer the question in the language of min-max lower bound complexity for minimizing the $L$-smooth and $\mu$-PL function class. We  analyze the worst complexity of minimizing any function that belongs to the  class using first-order algorithms.  Excitingly,   we construct  a hard instance function showing that any first-order algorithm  requires at least  ${\Omega}\left(\frac{L}{\mu}\log\frac{1}{\eps}\right)$ gradient costs to find an $\eps$-approximate optimal solution.  This answers the aforementioned question in an explicit way:  the Gradient Descent algorithm is already \textit{optimal} in the sense that no first-order algorithm can achieve a provably faster convergence rate in general ignoring a numerical constant.  For the first time, we  distinguish the hardness of minimizing a  PL function and a strongly convex function in terms of first-order complexities, as the momentum technique for smooth and strongly convex functions provably accelerates Gradient Descent by a certain polynomial order.

It is worth mentioning that 
the optimization problem under our consideration is high-dimensional and the goal is to obtain the complexity bounds that do not have an explicit dependency on the dimension. 

Our technique to establish the lower bound follows from the previous lower bounds in convex \citep{nesterov2003introductory} and non-convex optimization \citep{carmon2021lower}. The main idea is to construct a so-called ``zero-chain'' function ensuring that any first-order algorithm per-iteratively can only solve one coordinate of the optimization variable.  Then for  a  ``zero-chain'' function that has a sufficiently high dimension,  some number of entries will never reach their optimal values after the execution of any first-order algorithm in certain iterations.  To obtain the desired ${\Omega}\left(\frac{L}{\mu}\log\frac{1}{\eps}\right)$ lower bound, we propose a   ``zero-chain'' function similar to \citet{carmon2020lower},  which is composed of  the worst convex function designed by \citet{nesterov2003introductory} and a  separable function in the form as $\sum_{i=1}^T v_{\y_i}(\x_i)$ to destroy the convexity. Different from their separable function, the one that we introduce has a large Lipshictz constant. This property helps us to estimate the PL constant in a convenient way. This new idea gives new insights into the constructions and analyses of instance functions, which might be potentially generalized to establish the lower bounds for other non-convex problems. 

\subsection*{Notation}
We use bold letters, such as $\x$, to denote vectors in the Euclidean space $\R^d$, and bold capital letters, such as $\bA$, to denote matrices. $\bI_d$ denotes the identity matrix of size $d\times d$.  We omit the subscript and simply denote $\bI$ as the identity matrix when the dimension is clear from context. For $\x\in \R^d$, we use $\x_i$ to denote its $i$th coordinate. We use $\supp(\x)$ to denote the subscripts of non-zero entries of $\x$, i.e. $\supp(\x) = \{i:\x_i\ne 0\}$. We use $\mathop{\mathrm{span}}\left\{\xx 1,\cdots,\xx n \right\}$ to denote the linear subspace spanned by $\xx 1,\cdots,\xx n$, i.e. $\left\{\y:\y = \sum_{i=1}^n a_i\xx i, a_i\in \R\right\}$. We call a function $f$ $L$-smooth if $\nabla f$ is $L$-Lipschitz continuous, i.e. $\|\nabla f(\x)-\nabla f(\y)\|\le L\|\x-\y\|$. We denote $f^* = \inf_{\x} f(\x)$.
We let $\x^*$ be any minimizer of $f$, i.e.,  $\x^* = \argmin f$.  We always assume the existence of $\x^*$. We say that $\x$ is an $\eps$-approximate optimal point of $f$ when $f(\x)-f^*\le \eps$. %We use $\tO(\cdot)$ to hide polynomial logarithmic terms. 

\section{Related Work}

\textbf{Lower Bounds} There has been a line of research concerning the lower bounds of algorithms on certain function classes. To the best of our knowledge, \citep{nemirovskij1983problem} defines the oracle model to measure the complexity of algorithms, and most existing research on lower bounds follow this formulation of complexity. For convex functions and first-order oracles, the lower bound is studied in \citet{nesterov2003introductory}, where well-known optimal lower bound $\Omega(\eps^{-\frac12})$ and $\Omega(\kappa\log\frac{1}{\eps})$ are obtained. For convex functions and $n$th-order oracles, lower bounds $\Omega\left(\eps^{-\frac{2}{3n+1}}\right)$ have been proposed in \citet{arjevani2019oracle}. When the function is non-convex, it is generally NP-hard to find its global minima, or to test whether a point is a local minimum or a saddle point \citep{murty1985some}. Instead of finding $\eps$-approximate optimal points, an alternative measure is finding $\eps$-stationary points where $\|\nabla f(\x)\|\le\eps$. Sometimes, additional constraints on the Hessian matrices of second-order stationary points are needed. Results of this kind include \citet{carmon2020lower,carmon2021lower,fang2018spider,zhou2019lower,arjevani2019lower,arjevani2020second}. Though a PL function may be non-convex, it is tractable to find an  $\eps$-approximate optimal point, as local minima of a PL function must be global minima. In this paper, we give the lower complexity bound for finding $\eps$-approximate optimal points.
% The combination of the aforementioned techniques results in the low convergence rate, as the algorithm can only discover one coordinate per iteration. For deterministic algorithms, \citep{nesterov2003introductory} introduced the ``most difficult function in the world'', and gave the lower bound of first-order algorithms on convex and strongly convex smooth functions; \citep{carmon2020lower} established the lower bound of high-order algorithms on high-order smooth functions;  \citep{carmon2021lower} established the lower bound of first-order algorithms on high-order smooth functions; \citep{hinder2020near} studied first-order algorithms and star-convex functions. For random algorithms in different settings, lower bounds include \citep{woodworth2016tight}, \citep{fang2018spider}, \citep{zhou2019lower}, \citep{arjevani2019lower}, \citep{arjevani2020second}. Most of the lower bounds above can be achieved by a rescaled version of the counter-example in \citep{carmon2021lower} and \citep{carmon2020lower}. The counter-example in this paper is different from the one in \citep{carmon2020lower} and \citep{carmon2021lower}.

\noindent\textbf{PL Condition} The PL condition was introduced by Polyak \citep{POLYAK1963864} and \L ojasiewicz \citep{lojasiewicz1963topological} independently.  Besides the PL condition, there are other relaxations of the strong convexity, including error bounds \citep{luo1993error}, essential strong convexity \citep{liu2014asynchronous}, weak strong convexity \citep{necoara2019linear}, restricted secant inequality \citep{zhang2013gradient}, and quadratic growth \citep{anitescu2000degenerate}. \citet{karimi2016linear} discussed the relationships between these conditions. All these relaxations implies the PL condition except for the quadratic growth, which implies that the PL condition is quite general. \citet{danilova2020non} studied the convergence rate of Heavy-ball method on PL functions. \citet{wang2022provable} proved an accelerated convergence rate for Heavy-ball algorithm when the non-convexity is ``averaged-out''. There are many other papers that study designing practical algorithms to optimize a PL objective function under different scenarios, for example, \citet{bassily2018exponential,nouiehed2019solving,hardt2016identity,fazel2018global,j2016proximal,lei2017non}.
%However, it is sufficient for Gradient Descent and its stochastic counterparts to converge linearly \citep{POLYAK1963864}, \citep{karimi2016linear}.  For the strong convexity, the momentum technique in \citep{nesterov2003introductory} accelerates Gradient Descent to the lower bound, but it remains unclear whether Gradient Descent on functions that satisfy the PL conditions can be accelerated.

\section{Preliminaries}
\subsection{Upper bound on PL functions}

Although the PL condition is a weaker condition than strong convexity, it guarantees linear convergence for Gradient Descent. The result can be found in \citet{POLYAK1963864} and \citet{karimi2016linear}. We present it here for completeness.

\newcounter{tapp1}
\setcounter{tapp1}{\value{theorem}}
\begin{theorem}
    If $f$ is $L$-smooth and satisfies $\mu$-PL condition, then the Gradient Descent algorithm with a constant step-size $\frac1L$:
    \begin{equation}
        \xx{k+1} = \xx{k} - \frac1L \nabla f(\xx{k}),
        \label{equ:GD}
    \end{equation}
    has a linear convergence rate. We have:
    \begin{equation}
        f(\xx{k})-f^* \le \left(1-\frac\mu L \right)^k(f(\xx{0}) - f^*).
    \end{equation}
    \label{thm:linear}
\end{theorem}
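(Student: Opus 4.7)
The plan is to combine the standard $L$-smoothness descent inequality with the PL condition to obtain a one-step linear contraction on the optimality gap, and then iterate. I would not expect any essential obstacle; the only delicate choice is the constant step size $1/L$, which is picked precisely to make the quadratic upper bound from smoothness as tight as possible when substituting the GD update.

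First I would invoke the descent lemma implied by $L$-smoothness: for any $\x, \y \in \R^d$,
\begin{equation*}
f(\y) \le f(\x) + \langle \nabla f(\x), \y - \x \rangle + \frac{L}{2}\|\y - \x\|^2.
\end{equation*}
Setting $\x = \xx{k}$ and $\y = \xx{k+1} = \xx{k} - \tfrac{1}{L}\nabla f(\xx{k})$ as in \eqref{equ:GD}, the cross term becomes $-\tfrac{1}{L}\|\nabla f(\xx{k})\|^2$ and the quadratic term becomes $\tfrac{1}{2L}\|\nabla f(\xx{k})\|^2$, so the two combine to yield
\begin{equation*}
f(\xx{k+1}) \le f(\xx{k}) - \frac{1}{2L}\|\nabla f(\xx{k})\|^2.
\end{equation*}

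Next I would subtract $f^*$ from both sides and apply the $\mu$-PL inequality \eqref{equ:PLdef} at $\xx{k}$, which gives $\|\nabla f(\xx{k})\|^2 \ge 2\mu\bigl(f(\xx{k}) - f^*\bigr)$. Plugging this in produces the one-step contraction
\begin{equation*}
f(\xx{k+1}) - f^* \le \left(1 - \frac{\mu}{L}\right)\bigl(f(\xx{k}) - f^*\bigr).
\end{equation*}
Note that since $f$ is $L$-smooth and $\mu$-PL, one automatically has $\mu \le L$, so the contraction factor lies in $[0,1)$ and the recursion is meaningful. Iterating this inequality from $k$ down to $0$ yields the claimed bound
\begin{equation*}
f(\xx{k}) - f^* \le \left(1 - \frac{\mu}{L}\right)^{k}\bigl(f(\xx{0}) - f^*\bigr),
\end{equation*}
which completes the proof. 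The argument is entirely mechanical; the only conceptual content is the observation that the PL inequality lets one convert the gradient-norm decrease produced by smoothness into a functional-gap decrease with the same geometric rate, in close analogy with the strongly convex case but without requiring convexity at any point of the argument.
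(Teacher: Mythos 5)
Your proof is correct and follows essentially the same route as the paper's: apply the smoothness descent lemma to the GD update to obtain $f(\xx{k+1}) \le f(\xx{k}) - \frac{1}{2L}\|\nabla f(\xx{k})\|^2$, substitute the PL inequality, and iterate. The paper (citing \citet{karimi2016linear}) does exactly this, so there is nothing to add.
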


% \begin{proof}
%     By the $L$-smoothness of $f$,
%     \begin{equation}
%         f(\xx{k+1})\le f(\xx{k})+\left\langle\nabla f(\xx k),\xx{k+1}-\xx{k} \right\rangle+\frac L2 \left\|\xx{k+1}-\xx{k} \right\|^2.
%         \label{equ:thm1pf1}
%     \end{equation}
%     Applying \eqref{equ:GD} and \eqref{equ:PLdef} in \eqref{equ:thm1pf1}, we have
%     \begin{equation}
%         f(\xx{k+1})-f(\xx{k})\le \frac{1}{2L} -\left\| \nabla f(\xx{k})\right\|^2\le -\frac{\mu}{L}\left(f(\xx{k})-f^*\right).
%         \label{equ:thm1pf2}
%     \end{equation}
%     Rearranging terms in \eqref{equ:thm1pf2}, we have
%     \begin{equation}
%         f(\xx{k+1})-f^*\le \left(1-\frac{\mu}{L}\right)\left(f(\xx{k})-f^* \right).
%         \label{equ:thm1pf3}
%     \end{equation}
%     Applying \eqref{equ:thm1pf3} recursively gives the result.
% \end{proof}

Theorem \ref{thm:linear} shows that the Gradient Descent algorithm finds the $\eps$-approximate optimal point of $f$ in $O\left(\frac{L}{\mu}\log \frac 1\eps \right)$ gradient computations. This gives an upper complexity bound for first-order algorithms. However, it remains open to us whether there are faster algorithms for smooth PL functions. We will establish a lower complexity bound on first-order algorithms, which nearly matches the upper bound.

\subsection{Definitions of algorithm classes and function classes}
An algorithm is a mapping from real-valued functions to sequences. For algorithm $\sA$ and $f:\R^d\to \R$, we define $\sA[f]=\{\xx{i}\}_{i\in \N}$ to be the sequence of algorithm $\mathsf A$ acting on $f$, where $\xx{i}\in \R^d$. 
% To put it formally, $\mathsf A$ is defined as follows:
% \begin{equation}
%     \begin{aligned}
%         \mathsf A:\bigcup_{d\in \N^+}\left\{f:\R^d\to \R \right\} &\to \bigoplus_{i\in \N} \left(\bigcup_{d\in \N^+} \R^d\right)\\
%         f&\mapsto \mathsf A[f],
%     \end{aligned}
% \end{equation}
% where $\mathsf A[f]\in \bigoplus \mathop{\mathrm{dom}}f$.

Note here, the algorithm under our consideration works on function defined on any Euclidean space. %%
% ??? An algorithm is defined without the knowledge of the dimension of the vector space, so it is able to act on functions from any Euclidean space. 
We call it the dimension-free property of the algorithm.

The definition of algorithms abstracts away from the the optimization process of a function. We consider algorithms which only make use of the first-order information of the iteration sequence. We call them first-order algorithms. If an algorithm is a first-order algorithm, then 
\begin{equation}
    \xx{i} = \mathsf A^{(i)} \left(\xx{0},\nabla f(\xx{0}),\cdots,\xx{i-1}, \nabla f(\xx{i-1}) \right),
    \label{equ:first-order}
\end{equation}
where $\mathsf A^{(i)}$ is a function depending on $\mathsf A$. Perhaps the simplest example of first-order algorithms is Gradient Descent.

We are interested in finding an $\eps$-approximate point of a function $f$. Given a function $f:\R^d\to \R$ and an algorithm $\mathsf A$, the complexity of $\mathsf A$ on $f$ is the number of queries to the first-order oracle needed to find an $\eps$-approximate point. We denote $T_\eps(\mathsf A, f)$ to be the gradient complexity of $\sA$ on $f$, then 
\begin{equation}
    T_\eps(\mathsf A, f) = \min_t\left\{t:f(\xx{t})-f^*\le \eps \right\}.
\end{equation}

In practice, we do not have the full information of the function $f$. We only know that $f$ is in a particular function class $\mathcal F$, such as $L$-smooth functions. Given an algorithm $\mathsf A$. We denote $\mathsf T_\eps(\mathsf A,\mathcal F)$ to be the complexity of $\mathsf A$ on $\mathcal F$, and define $\mathsf T_\eps(\mathsf A,\mathcal F)$ as follows:
\begin{equation}
    \mathsf T_\eps(\mathsf A,\mathcal F) = \sup_{f\in \mathcal F} T_\eps(\mathsf A,f).
\end{equation}
Thus, $\mathsf T_\eps(\mathsf A, \mathcal F)$ is the worst-case complexity of functions $f\in \mathcal F$.

For searching an $\eps$-approximate optimal point of a function in $\mathcal F$, we need to find an algorithm which have a low complexity on $\mathcal F$.  Denote an algorithm class by $\mathcal A$. The lower bound of an algorithm class on $\mathcal F$ describes the efficiency of algorithm class $\mathcal A$ on function class $\mathcal F$, which is defined to be
\begin{equation}
    \mathcal T_\eps(\mathcal A,\mathcal F) = \inf_{\mathsf A\in \mathcal A} \mathsf T_\eps(\mathsf A, \mathcal F) = \inf_{\mathsf A\in\mathcal A}\sup_{f\in \mathcal F} T_\eps(\mathsf A,f).
\end{equation}

\subsection{Zero-respecting Algorithm}
Among all the algorithms, a special algorithm class is called zero-respecting algorithms. If $\mathsf A$ is a zero-respecting algorithm and $\sA[f] = \left\{\xx t\right\}_{t\in \N}$, then the following condition holds for all $f:\R^d\to\R$:
\begin{equation}
    \supp\{\xx n-\xx 0\} \in \bigcup_{i=1}^{n-1} \supp \{\nabla f(\xx i)\}.
\end{equation}
Note that if $\xx n-\xx 0$ lies in the linear subspace spanned by $\nabla f(\xx 0),\cdots,\nabla f(\xx{n-1})$, then $\mathsf A$ is a zero-respecting algotithm. We denote the collection of first-order zero-respecting algorithms with $\xx 0 = \mathbf 0$ by $\Als$. It is shown by \citet{nemirovskij1983problem} that a lower complexity bound on first-order zero-respecting 
algorithms are also a lower complexity bound on all the first-order algorithm when the function class satisfies the orthogonal invariance property.

\subsection{Zero-chain}
A zero-chain $f$ is a function that safisfies the following condition:
\begin{equation}
    \supp (\x)\subseteq \{1,2,\cdots,k\}\implies \supp (\nabla f(\x))\subseteq \{1,2,\cdots, k+1\},\quad \forall \x.
\end{equation}
In other words, the support of $\nabla f(\x)$ lies in a restricted linear subspace depending on the support of $\x$. 

The ``worst function in the (convex) world'' in \citet{nesterov2003introductory} defined as 
\begin{equation}
    f_d(\x) = \frac12 (\x_1-1)^2 + \sum_{i=1}^{d-1} (\x_{i+1}-\x_i)^2 
    \label{equ:Nesdifficult}
\end{equation}
is a zero-chain, because if $\x_i = 0$ for $i>n$, then $\left(\nabla f_d(\x)\right)_{i+1} = 0$ for $i>n$. A zero-chain is difficult to optimize for zero-respecting algorithms, because zero-respecting algorithms only discover one coordinate by one gradient computation. 

\section{Main results}
\label{sec:main_result}
According to Theorem \ref{thm:linear}, we already have an upper complexity bound $O\left(\frac L\mu \log\frac 1\eps\right)$ by applying Gradient Descent to all the PL functions. In this section, we establish the lower complexity bound of first-order algorithms on PL functions. % The goal is compute . This requires to construct....   
Let $\Pdelta$ be the collection of all $L$-smooth and $\mu$-PL functions $f$ with $f(\xx 0) - f^* \le \Delta$. We establish a lower bound of $\mathcal T_\eps\left(\Als, \Pdelta \right) $ by constructing a function which is hard to optimize for zero-respecting algorithms, and extend the result to first-order algorithms. We present a hard instance 
 that 
 can achieve the desired $\Omega\left(\kappa\log\frac{1}{\eps}\right)$ lower bound below. %in Section \ref{subsec:tightbound}.

% \begin{figure}[t]
%   \centering
%   \subfloat[$\vT$]{
%     \label{sfig:v}
%     \includegraphics[height=0.145\textheight]{img/V_32.pdf}}
%   \subfloat[$\vT'$]{
%     \label{sfig:v'}
%     \includegraphics[height=0.145\textheight]{img/vprime_32.pdf}}
%   \subfloat[$\bT$]{
%     \label{sfig:b}
%     \includegraphics[height=0.145\textheight]{img/B_32.pdf}}
%   \caption{Illustration of $\vT$, $\vT'$ and $\bT$.}
%   \label{fig:v,b}
% \end{figure}

% \subsection{$\Omega\left(\kappa\log\frac{1}{\eps}\right)$ lower bound}
% \label{subsec:tightbound}

We first introduce several components of the hard instance. For the non-convex part, we define 
\begin{equation}
    \vy(x) = \begin{cases}
        \frac12 x^2,&x\le \frac{31}{32}y,\\
        \frac12 x^2 - 16\left(x-\frac{31}{32}y \right)^2, &\frac{31}{32}y< x\le y,\\
        \frac12 x^2 - \frac{y^2}{32} + 16\left(x-\frac{33}{32}y\right)^2, &y<x\le \frac{33}{32}y,\\
        \frac12 x^2-\frac{y^2}{32}, &x>\frac{33}{32}y,
    \end{cases}
    \label{equ:vydef}
\end{equation}
where $y>0$ is a constant. By the definition of $\vy$, we have
\begin{equation}
    \vy'(x) = \begin{cases}
        x, &x\le \frac{31}{32}y,\\
        x - 32\left(x-\frac{31}{32}y \right), &\frac{31}{32}y< x\le y,\\
        x + 32\left(x-\frac{33}{32}y\right), &y<x\le \frac{33}{32}y,\\
        x, &x>\frac{33}{32}y.
    \end{cases}
\end{equation}
Define
\begin{equation}
    \by(x) = \begin{cases}
        % 0,&x\le 1-T^{-c},\\
        % T^c \left(x-1+T^{-c}\right), &1-T^{-c}< x\le 1,\\
        % -T^c \left(x-1-T^{-c} \right), &1<x<1+T^{-c},\\
        % 0, &x>1+T^{-c}.
        y-32|x-y|, &\frac{31}{32}y\le x\le \frac{33}{32}y,\\
        0, &\text{otherwise}.
        \end{cases}
        \label{equ:bydef}
\end{equation}
Then we have $\vy'(x) = x-\by(x)$. 

For the convex part, we define $\qtt(\x)$ as follows (for the convenience of notation, we define $\x_0 = 0$):
\begin{equation}
    \qtt(\x) = \frac12\sum_{i=0}^{t-1} \left[\left(\frac{7}{8}\x_{iT}-\x_{iT+1}\right)^2 +  \sum_{j=1}^{T-1} \left(\x_{iT+j+1}-\x_{iT+j}\right)^2\right],
\end{equation}
where $\x\in \R^{Tt}$. $\qtt$ is a quadratic function of $\x$, thus can be written as 
\begin{equation}
    \qtt(\x) = \frac12 \x^T\bB \x,
\end{equation}
where $\bB$ is a positive semi-definite symmetric matrix. $\bB$ satisfies $0\preceq \bB\preceq 4\bI$, because the sum of absolute value of non-zero entries of each row of $\bB$ is smaller or equal to $4$.

The quadratic part $q$ is very similar to ``the worst function in the (convex) world'' in \citet{nesterov2003introductory}, and the definition of $\vy$ is inspired by the hard instance in \citet{carmon2021lower}. Our hard instance differs from previous ones mainly in the large Lipschitz constant of its gradient. We note that the controlled degree of nonsmoothness is crucial for our estimate of PL constant.  

Let $\y\in \R^{Tt}$ be a vector satisfying $\y_{qT+b} = {\left(\frac{7}{8}\right)}^{q}$, where $q\in \N$, $b\in \{1, 2, \cdots, T\}$. We define the hard instance $\gtt:\R^{Tt}\to \R$ as follows:
\begin{equation}
    \gtt(\x) = \qtt(\x) + \sum_{i=1}^{Tt} v_{\y_i}(\x_i).
    \label{equ:gttdef}
\end{equation}

Now we list some properties of $\gtt$ in Lemma \ref{lem:gttprop}, which we prove in Appendix \ref{subsec:logepsproof}.
\newcounter{lem4}
\setcounter{lem4}{\value{theorem}}
\begin{lemma}
    $\gtt$ satisfies the following.
    \begin{enumerate}
        \item $\gtt(\y-\x)$ is a zero-chain.
        \label{gprop0:zc}
        \item $\x^* = \mathbf 0$, $\gtt^* = 0$, $\gtt(\x)\le \frac12\x^T(\bB+\bI)\x$.
        \label{gprop1:min}
        \item $\gtt$ is $37$-smooth.
        \label{gprop2:smooth}
        \item $\gtt$ satisfies the $\frac{1}{C_3T}$-PL condition, where $C_3$ is a universal constant.
        \label{gprop3:PL}
    \end{enumerate}
    \label{lem:gttprop}
\end{lemma}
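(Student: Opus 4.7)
The plan is to verify the four properties in order, with Properties 1--3 following from direct structural checks and Property 4 (the PL estimate) being the technical crux.

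For the zero-chain (Property 1), I would exploit the tridiagonal structure of $\bB$: $(\nabla \qtt(\x))_j$ depends only on $\x_{j-1},\x_j,\x_{j+1}$, and the structured choice $\y_{qT+b}=(7/8)^q$ is constant within each block with the coupling identity $\tfrac{7}{8}\y_{qT}=\y_{qT+1}$ at block boundaries, so $(\nabla \qtt(\y))_j=0$ for all $j\ge 2$. Together with $v_{\y_i}'(\y_i)=0$ (immediate from $b_{\y_i}(\y_i)=\y_i$), evaluating the gradient at $\y-\x$ yields the zero-chain property on $\gtt$. For Property 2, $\gtt(\mathbf 0)=0$ and $v_{\y_i}(0)=0$ are immediate; nonnegativity of $\gtt$ follows from checking $v_y(x)\ge 0$ piecewise in the four regions of \eqref{equ:vydef}, and the upper bound $\gtt(\x)\le \tfrac12\x^T(\bB+\bI)\x$ reduces to $v_y(x)\le x^2/2$, whose only nontrivial case is $y<x\le \tfrac{33}{32}y$, where $16(x-\tfrac{33}{32}y)^2\le y^2/64<y^2/32$. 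For smoothness (Property 3), $\gtt$ is piecewise $C^2$ with Hessian $\bB+\mathrm{diag}(v_{\y_i}''(\x_i))$ on each piece; combining the row-sum bound $\|\bB\|\le 4$ with $|v_y''|\le 33$ yields an operator-norm bound of $37$ on each piece, and continuity of $\nabla \gtt$ at the kinks of $v_y$ promotes this to a global gradient-Lipschitz constant.

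Property 4, the inequality $\gtt(\x)\le \tfrac{C_3 T}{2}\|\nabla \gtt(\x)\|^2$, is the heart of the argument. The factor $T$ is already tight at $\x=\y$: one computes $\gtt(\y)=\tfrac{1}{2}+\tfrac{31}{64}\sum_i \y_i^2=\Theta(T)$ while $\nabla \gtt(\y)=\mathbf{e}_1$ (the only surviving coordinate arises from the Dirichlet term $\x_0=0$ in the first block), forcing $\mu=\Theta(1/T)$. My strategy is to use the decomposition $v_y(x)=\tfrac{x^2}{2}-r_y(x)$, where $r_y\ge 0$ is supported on $(\tfrac{31}{32}y,\tfrac{33}{32}y)$ and bounded by $y^2/32$, so that $\gtt=\tilde g-R$ with $\tilde g(\x)=\tfrac12 \x^T(\bB+\bI)\x$ being $1$-strongly convex (as $\bB\succeq 0$) and $R(\x)=\sum_i r_{\y_i}(\x_i)$ a small, localized nonconvex perturbation. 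A short computation yields the clean identity $b_y(x)^2\le 64\,r_y(x)$ (with equality at $x=y$), hence $\|\nabla R\|^2\le 64\,R(\x)$.

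The main obstacle is that the naive Young-type estimate $\|\nabla \gtt\|^2\ge \tfrac12 \|\nabla\tilde g\|^2-\|\nabla R\|^2\ge \tilde g-64R=\gtt-63R$ becomes negative at exactly the worst-case point: at $\x=\y$ we have $R(\y)=\Theta(T)$ comparable to $\gtt(\y)$, so the bound fails. To close the gap, my plan is to refine the analysis via a dichotomy on the active bump set $S=\{i:\x_i\in(\tfrac{31}{32}\y_i,\tfrac{33}{32}\y_i)\}$: outside bumps, $\gtt$ coincides with the $1$-strongly convex $\tilde g$ and strong convexity drives the PL estimate with constant $\Omega(1)$; inside bumps, I would use the coupling in $\qtt$ to convert the ``missing gradient at a bumped coordinate'' into a gradient at an adjacent unbumped coordinate, propagating along blocks of length $T$. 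The hardest technical step will be quantifying this propagation so that the $\Theta(1/T^2)$ spectral bottom of $\bB$ is upgraded to the target $\Theta(1/T)$ through the uniform Lipschitz control $|v_y''|\le 33$ on the bumps, yielding the claimed PL constant $\tfrac{1}{C_3 T}$ without losing an extra factor of $T$.
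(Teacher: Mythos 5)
Your handling of Properties~1--3 is correct and matches the paper's argument in spirit: zero-chain via the tridiagonal structure and $v_{\y_i}'(\y_i)=0$; Property~2 via $v_y(x)\le x^2/2$; smoothness by combining $\|\bB\|\le 4$ with the $33$-Lipschitz bound on $x\mapsto x-b_y(x)$. Your observation that the PL constant must be $\Theta(1/T)$ by testing at $\x=\y$ (where $\nabla\gtt(\y)=\mathbf e^{(1)}$ yet $\gtt(\y)=\Theta(T)$) is also correct and is a useful sanity check, and the pointwise identity $b_y(x)^2\le 64\,r_y(x)$ with equality at $x=y$ is a genuine and tidy observation.

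However, for Property~4 there is a real gap: the propagation step you flag as ``the hardest technical step'' is not a detail, it is the entire substance of the lemma, and your proposal stops short of it. The paper does not use the decomposition $\gtt=\tilde g-R$ at all; it works directly in the normalized coordinates $\tz_k=\tx_k/\ty_k$ and builds a combinatorial charging scheme. Concretely, it declares a coordinate $k$ \emph{dominated} when $|(\nabla\gtt(\x))_k/\tx_k|>0.19$, proves (Lemma~\ref{lem:dom}) that any undominated coordinate outside the bump region forces a geometric blow-up $|\tz_{k\pm 1}|\ge\frac43|\tz_k|$ toward some neighbor, handles bump coordinates with separate lemmas (Lemmas~\ref{lem:into1}--\ref{lem:dom2}), and then charges each $\tx_n^2$ either to its own gradient entry or, via a $\Next$/$\UBindex$ path, to a distant dominated coordinate with a geometric weight; summing the charges in three cases shows they total at most $\|\nabla\gtt(\x)\|^2$, and the per-coordinate ratios give the $\Theta(1/T)$ constant in case $C$. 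Your ``convert missing gradient into gradient at an adjacent coordinate'' intuition is in the right direction, but without a worked-out charging rule that controls both the geometric propagation and the bump blocks simultaneously, the argument does not close. Also, your closing remark about upgrading the ``$\Theta(1/T^2)$ spectral bottom of $\bB$'' is a red herring: since $\bB\succeq 0$, the quadratic part $\tfrac12\x^T(\bB+\bI)\x$ is uniformly $1$-strongly convex independent of $T$; the factor $T$ in the PL constant does not come from the spectrum of $\bB$ but from the bump-induced loss along a chain of length $T$ within a block, which is exactly what the $\Next$/$\UBindex$ machinery in the paper is built to quantify.
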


Define $\tg$ to be the following function, which is hard for first-order algorithms:
\begin{equation}
    \tg(\x) = \frac{L T^{-1} D^2}{37}\gtt\left(\y - T^{1/2}D^{-1}\x\right),
    \label{equ:tgref}
\end{equation}
where $D=c\|\x^{(0)}-\x^*\|$, and $c$ is a constant. In smooth optimization, $D$ is often treated as a constant.

In Lemma \ref{lem:zero-chain-log} below, we show that $\tg$ is hard for first-order zero-respecting algorithms:
\newcounter{lem5}
\setcounter{lem5}{\value{theorem}}
\begin{lemma}
    Assume that $\eps < 0.01$ and let $t = 2\left\lfloor\log_{\frac{8}{7}} \frac{3}{2\eps}\right\rfloor$. A first-order zero-respecting algorithm with $\xx 0 = \mathbf 0$ needs at least $\frac12Tt$ gradient computations to find a point $\x$ satisfying $\tg(\x) - \tg^*\le \eps(\tg (\xx 0) - \tg^*)$.
    \label{lem:zero-chain-log}
\end{lemma}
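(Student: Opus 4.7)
The plan is to exploit the zero-chain property of $\gtt(\y-\cdot)$ from Lemma~\ref{lem:gttprop}(\ref{gprop0:zc}) to track the supports of the iterates, and then lower-bound $\gtt$ on the resulting support-constrained set using the non-negativity of $v_y$ and the geometric decay of $\y$.

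First, since the coordinate-wise rescaling $\x \mapsto T^{1/2}D^{-1}\x$ preserves supports, $\tg$ inherits the zero-chain property: $\supp(\x)\subseteq\{1,\dots,k\}$ implies $\supp(\nabla\tg(\x))\subseteq\{1,\dots,k+1\}$. An easy induction from $\xx 0=\mathbf 0$ then yields $\supp(\xx k)\subseteq\{1,\dots,k\}$ for every first-order zero-respecting iterate. Writing the residual $\z^{(k)} := \y - T^{1/2}D^{-1}\xx k$ and using $\gtt^*=\gtt(\mathbf 0)=0$ from Lemma~\ref{lem:gttprop}(\ref{gprop1:min}), the target condition $\tg(\xx k)-\tg^*\le\eps(\tg(\xx 0)-\tg^*)$ rewrites as $\gtt(\z^{(k)}) \le \eps\,\gtt(\y)$ with $\z^{(k)}_i=\y_i$ for all $i>k$. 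I would then prove the contrapositive: for every $k<\tfrac12 Tt$ and every $\z$ with $\z_i=\y_i$ for $i>k$, one has $\gtt(\z) > \eps\,\gtt(\y)$.

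For the lower bound on $\gtt(\z)$, I would drop the non-negative pieces $\qtt(\z)$ (sum of squares) and $\{v_{\y_i}(\z_i)\}_{i\le k}$ (non-negativity of $v_y$ is a short case check on the four pieces of \eqref{equ:vydef}), obtaining $\gtt(\z) \ge \sum_{i>k} v_{\y_i}(\y_i) = \tfrac{31}{64}\sum_{i>k}\y_i^2$ from the explicit value $v_y(y)=\tfrac{31}{64}y^2$ produced by plugging $x=y$ into the second piece of \eqref{equ:vydef}. For the matching upper bound on $\gtt(\y)$, Lemma~\ref{lem:gttprop}(\ref{gprop1:min}) gives $\gtt(\y) \le \qtt(\y) + \tfrac12\|\y\|^2$, and the geometric choice $\y_{qT+b}=(7/8)^q$ makes every cross-block coupling in $\qtt(\y)$ cancel except the initial one $(\tfrac78\cdot 0-\y_1)^2=1$, so $\qtt(\y)=\tfrac12$. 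Both $\|\y\|^2$ and $\sum_{i>k}\y_i^2$ are closed-form geometric sums in $(49/64)^q$, and the resulting ratio is controllable by $(7/8)^{2\lceil k/T\rceil}$; the choice $t=2\lfloor\log_{8/7}(3/(2\eps))\rfloor$ together with $\eps<0.01$ is calibrated so that this ratio exceeds $\eps$ for every $k<\tfrac12 Tt$.

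The main obstacle is the constant bookkeeping in this last step: the inner constant $3/2$ in the logarithm and the outer factor $2$ in $t$ must absorb the slack coming from (i) bounding $v_{\y_i}(\y_i)$ as $\tfrac{31}{64}\y_i^2$ rather than the naive $\tfrac12\y_i^2$, (ii) the additive $\qtt(\y)=\tfrac12$ in the upper bound for $\gtt(\y)$, (iii) the floor in the definition of $t$, and (iv) the case where the boundary between free and fixed coordinates lies inside a block. Since every coordinate within a block shares the same $\y_i$, the tail $\sum_{i>k}\y_i^2$ decreases only when $k$ crosses a block boundary, so the extremal value of $k$ over $k<\tfrac12 Tt$ is achieved at or adjacent to a block boundary, and the partial-block correction introduces only a bounded constant that the calibrated choice of $t$ accommodates.
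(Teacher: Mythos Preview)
Your proposal takes essentially the same route as the paper's proof: both use the zero-chain property to force $\supp(\xx k)\subseteq\{1,\dots,k\}$, lower-bound $\gtt(\z^{(k)})$ by discarding the non-negative terms $\qtt(\z)$ and $\{v_{\y_i}(\z_i)\}_{i\le k}$ to obtain $\tfrac{31}{64}\sum_{i>k}\y_i^2$, compute $\qtt(\y)=\tfrac12$ from the telescoping block structure of $\y$, and finish by comparing geometric tails. The only cosmetic difference is that the paper evaluates $\gtt(\y)=\tfrac12+\tfrac{31}{64}\|\y\|^2$ directly (reusing the identity $v_y(y)=\tfrac{31}{64}y^2$ you already derived) rather than invoking the looser inequality $\gtt(\x)\le\qtt(\x)+\tfrac12\|\x\|^2$ from Lemma~\ref{lem:gttprop}(\ref{gprop1:min}).
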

\begin{proof}
    By induction, we have $\supp(\xx k)\subseteq \{1,\cdots,k\}$. By the definition of $\tg$ and $\vy$, we have
    \begin{equation}
        \begin{aligned}
        \gtt(\mathbf 0) &= \frac12 + \sum_{i=0}^{t-1}Tv_{\left(\frac{7}{8}\right)^i}\left(\left(\frac{7}{8}\right)^i\right)\\
        &= \frac12 + T\sum_{i=0}^{t-1} \frac{31}{64}\left(\frac{7}{8}\right)^{2i}\\
        &= \frac12 + T\cdot\frac{31}{15}\cdot\left(1-\left(\frac{7}{8}\right)^{2t}\right)\\
        &\le 3T.\\
        \end{aligned}
    \end{equation}
    For $k\le \frac12Tt$,
    \begin{equation}
        \begin{aligned}
            \gtt(\xx k)&\ge \sum_{i=\frac{t}{2}}^t Tv_{\left(\frac{7}{8}\right)^i}\left(\left(\frac{7}{8}\right)^i\right)\\
            &=T\sum_{i=\frac{t}{2}}^t \frac{31}{64}\left(\frac{7}{8}\right)^{2i}\\
            &= T\cdot \left(\frac{7}{8}\right)^{\frac{t}{2}}\cdot \frac{31}{15}\left(1-\left(\frac{7}{8}\right)^{t}\right)\\
            &\ge 2T\cdot \frac{3\eps}{2}\\
            &= 3T\eps.
        \end{aligned}
    \end{equation}
    Therefore, for $k\le \frac{1}{2}Tt$, $\tg(\xx k) - \tg^*\ge \eps(\tg (\xx 0) - \tg^*)$.
\end{proof}

With Lemma \ref{lem:gttprop} and \ref{lem:zero-chain-log}, we obtain a lower bound for zero-respeting algorithms:
\newcounter{thm4}
\setcounter{thm4}{\value{theorem}}
\begin{theorem}
    Given $L\ge\mu>0$. When $\kappa = \frac{L}{\mu}> C_4$ where $C_4$ is a universal constant, there exists $T$ and $t$ such that $\tg$ is $L$-smooth and $\mu$-PL. Moreover, any first-order zero-respecting algorithm with $\xx{0} = \mathbf 0$ needs at least $\Omega\left(\kappa\log\frac{1}{\eps}\right)$ gradient computations to find a point $\x$ satisfying $\tg(\x) - \tg^*\le \eps(\tg (\xx 0) - \tg^*)$. 
    \label{thm:mainlog}
\end{theorem}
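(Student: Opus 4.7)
The plan is to reduce Theorem \ref{thm:mainlog} to Lemma \ref{lem:gttprop} and Lemma \ref{lem:zero-chain-log} by choosing the parameters $T$ and $t$ in the definition \eqref{equ:tgref} of $\tg$ appropriately. Writing $\tg(\x) = \alpha\,\gtt(\y - \gamma \x)$ with $\alpha = LD^2/(37T)$ and $\gamma = T^{1/2}/D$, the chain rule gives $\nabla \tg(\x) = -\alpha\gamma\,\nabla\gtt(\y - \gamma\x)$ and $\nabla^2 \tg(\x) = \alpha\gamma^2\,\nabla^2\gtt(\y - \gamma\x)$, and one checks directly that $\alpha\gamma^2 = L/37$.

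For smoothness, I would invoke Lemma \ref{lem:gttprop}(\ref{gprop2:smooth}) to bound $\|\nabla^2 \tg(\x)\| \le \alpha\gamma^2\cdot 37 = L$, so $\tg$ is $L$-smooth regardless of the choice of $T$. For the PL condition, I would push the $\frac{1}{C_3 T}$-PL bound on $\gtt$ from Lemma \ref{lem:gttprop}(\ref{gprop3:PL}) through the rescaling. Since $\tg(\x)-\tg^* = \alpha(\gtt(\y-\gamma\x)-\gtt^*)$, a direct calculation gives
\begin{equation*}
\|\nabla \tg(\x)\|^2 = \alpha^2\gamma^2\,\|\nabla\gtt(\y-\gamma\x)\|^2 \ge \alpha^2\gamma^2\cdot\frac{2}{C_3 T}\bigl(\gtt(\y-\gamma\x)-\gtt^*\bigr) = \frac{2\alpha\gamma^2}{C_3 T}\bigl(\tg(\x)-\tg^*\bigr).
\end{equation*}
Matching the right-hand coefficient to $2\mu$ reduces the PL requirement to $\alpha\gamma^2/(C_3 T) = L/(37 C_3 T)\ge\mu$, i.e., $T\le\kappa/(37 C_3)$. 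I would therefore set $T = \lfloor\kappa/(37 C_3)\rfloor$, choosing $C_4$ large enough that $\kappa>C_4$ forces $T\ge 1$; in fact $T = \Theta(\kappa)$.

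Finally, with $t = 2\lfloor\log_{8/7}(3/(2\eps))\rfloor$ as prescribed in Lemma \ref{lem:zero-chain-log}, that lemma lower-bounds the number of gradient calls needed to reach $\tg(\x)-\tg^*\le\eps(\tg(\xx 0)-\tg^*)$ by $Tt/2 = \Theta(\kappa)\cdot\Theta(\log(1/\eps)) = \Omega(\kappa\log(1/\eps))$, which is the desired bound. The main obstacle in this plan is the bookkeeping around the scaling constants: the pair $(\alpha,\gamma)$ must simultaneously normalize the $37$-smoothness to $L$ and convert the $\frac{1}{C_3 T}$-PL constant to exactly $\mu$, and it is this double constraint (smoothness tightens $\alpha\gamma^2$, PL then dictates the size of $T$) that forces $T$ to grow linearly in $\kappa$ and thereby produces the $\kappa\log(1/\eps)$ factor rather than anything smaller. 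The condition $\kappa>C_4$ and the hypothesis $\eps<0.01$ from Lemma \ref{lem:zero-chain-log} ensure that $T\ge 1$ and $t\ge 1$, so the construction is non-degenerate.
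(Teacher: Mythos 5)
Your proposal is correct and follows essentially the same route as the paper's proof: you pick $T = \lfloor \kappa/(37C_3)\rfloor$ and $t = 2\lfloor\log_{8/7}(3/(2\eps))\rfloor$, verify $L$-smoothness and $\mu$-PL of $\tg$ by scaling the constants from Lemma \ref{lem:gttprop} through the factors $\alpha = LD^2/(37T)$ and $\gamma^2 = T/D^2$ (so $\alpha\gamma^2 = L/37$), and then invoke Lemma \ref{lem:zero-chain-log} to get the $Tt/2 = \Omega(\kappa\log(1/\eps))$ lower bound. The only cosmetic difference is that the paper explicitly fixes $C_4 = 370C_3$ to absorb the floor, and bounds the Lipschitz constant of $\nabla\tg$ directly rather than through $\nabla^2\tg$ (which is only defined almost everywhere since $\vy$ is $C^1$ but not $C^2$); neither affects the argument.
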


\begin{proof}
    We let $C_4=370C_3$. Given $\kappa>C_4$, we let 
    \begin{equation}
        T = \left\lfloor\frac{\kappa}{37C_3} \right\rfloor,
        \label{equ:gTdef}
    \end{equation}
    and 
    \begin{equation}
        t = 2\left\lfloor\log_{\frac{8}{7}} \frac{3}{2\eps}\right\rfloor.
        \label{equ:gtdef}
    \end{equation}
    
    We use Lemma \ref{lem:gttprop} to calculate the smoothness constant and PL constant of $\tg$. The smoothness constant of $\tg$ is:
    \begin{equation}
        \begin{aligned}
            \frac{L T^{-1} D^2}{37} \cdot T D^{-2} \cdot 37 = L,
        \end{aligned}
    \end{equation}
    and the PL constant of $\tg$ is:
    \begin{equation}
        \begin{aligned}
            \frac{L T^{-1} D^2}{37} \cdot T D^{-2} \cdot \frac{1}{C_3T}&=\frac{L}{37C_3T}\\
            &\stackrel{a}{=} \frac{L}{37C_3} \cdot\frac{1}{\left\lfloor\frac{\kappa}{37C_3} \right\rfloor}\\
            &\ge \frac{L}{\kappa}\\
            &=\mu,
        \end{aligned}
    \end{equation}
    where $\stackrel{a}{=}$ uses \eqref{equ:gTdef}.
    
    Finally, by Lemma \ref{lem:zero-chain-log}, any first-order zero-respecting algorithm needs at least $\frac {Tt}2$ accesses to gradient to find $\x$ such that $\tg(x)-\tg^*\le \eps\left(\tg(\xx 0) - \tg^* \right)$. By \eqref{equ:gTdef} and \eqref{equ:gtdef},
    \begin{equation}
        \frac {Tt}{2}\ge \left(\frac{\kappa}{37C_1}-1\right)\cdot\left(\log_{\frac{8}{7}}\frac{3}{2\eps}\right) = \Omega\left(\kappa\log\frac{1}{\eps}\right),
    \end{equation}
    which completes the proof.
\end{proof}

Using the technique of \citet{nemirovskij1983problem}, for specific function classes such as PL functions, a lower complexity bound on first-order zero-respecting algorithms is also a lower complexity bound on all the first-order algorithms. Denoting the set of all first-order algorithms by $\Afirst$, we have the following lemma:

\newcounter{app3}
\setcounter{app3}{\value{theorem}}
\begin{lemma}
    \begin{equation}
        \mathcal T_\eps\left(\Afirst, \Pdelta\right)\ge \mathcal T_\eps\left(\Als, \Pdelta\right).
    \end{equation}
    \label{lem:ls}
\end{lemma}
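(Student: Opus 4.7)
The lemma is the classical Nemirovski--Yudin ``resisting oracle'' reduction, which converts a zero-respecting lower bound into a lower bound against all first-order algorithms whenever the function class is closed under orthogonal transformations. The plan is: for each $\sA \in \Afirst$, exhibit a function $\tilde h_\sA \in \Pdelta$ on which $\sA$ requires at least $\mathcal T_\eps(\Als,\Pdelta)$ gradient queries; taking the supremum over $f\in\Pdelta$ and then the infimum over $\sA$ then yields the stated inequality.

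Fix $\sA \in \Afirst$ and let $\tg: \R^{Tt}\to \R$ be the hard instance of Theorem \ref{thm:mainlog}. Extend $\tg$ trivially to a sufficiently high-dimensional ambient space $\R^d$ by making it independent of the last $d-Tt$ coordinates, and define $\tilde h_\sA(\x) := \tg(\bU \x)$ for an orthogonal matrix $\bU \in \R^{d\times d}$ whose rows $\bu_1, \bu_2,\dots$ will be committed row-by-row as $\sA$ executes. The adaptive rule is: whenever $\sA$ outputs a new iterate $\xx k$, we append at most one orthonormal row $\bu_{m+1}$ so that $\xx k$ lies in the span of the committed rows; by the zero-chain property of $\tg$ (Lemma \ref{lem:gttprop}, item \ref{gprop0:zc}), $\nabla \tg(\bU \xx k)$ is then supported on at most one extra coordinate, so committing one additional orthonormal row suffices to return $\nabla \tilde h_\sA(\xx k) = \bU^{\top}\nabla \tg(\bU \xx k)$. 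A simple induction shows that after $k$ queries we have committed at most $2k+1$ rows, and $\bU \xx k$ is supported on at most $2k+1$ of the natural coordinates of $\tg$'s domain. Because $\bU$ is orthogonal, $\tilde h_\sA$ inherits $L$-smoothness, the $\mu$-PL condition, and the initial value gap $\tilde h_\sA(\mathbf 0) - \tilde h_\sA^* = \tg(\mathbf 0) - \tg^* \le \Delta$, so $\tilde h_\sA \in \Pdelta$. Finally, the proof of Lemma \ref{lem:zero-chain-log} bounds $\tg(\y)$ from below in terms of $|\supp(\y)|$ alone, so applying it to $\y = \bU \xx k$ with $k$ taken up to $\Omega(Tt) = \mathcal T_\eps(\Als,\Pdelta)$ gives $\tilde h_\sA(\xx k) - \tilde h_\sA^* > \eps\bigl(\tilde h_\sA(\mathbf 0) - \tilde h_\sA^*\bigr)$, hence $T_\eps(\sA,\tilde h_\sA) \ge \mathcal T_\eps(\Als,\Pdelta)$.

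The main obstacle is the self-consistency of the adaptive construction of $\bU$: the iterate $\xx k$ produced by $\sA$ depends on gradients already returned, which in turn depend on the rows of $\bU$ committed so far. The circularity is resolved by observing that each extension merely appends a fresh orthonormal row, which is always possible provided the ambient dimension $d$ exceeds the number of rows already committed. Because the algorithm class $\Afirst$ is dimension-free, we may take $d$ as large as needed (concretely $d \ge 2N+1$ where $N = \mathcal T_\eps(\Als,\Pdelta)$) without leaving the class. A secondary point to verify is that the bound in Lemma \ref{lem:zero-chain-log} really depends only on the coordinate support of the query point rather than on any zero-respecting property of the algorithm, which is immediate from inspecting its proof. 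Assembling these pieces yields the lemma, exactly as in the classical argument of \citet{nemirovskij1983problem}.
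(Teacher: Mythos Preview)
The paper does not actually supply a proof of this lemma; it only states it and defers to ``the technique of \citet{nemirovskij1983problem}'' (see the sentence preceding the lemma in Section~\ref{sec:main_result}, and note that no proof appears in the appendix). Your proposal is exactly the standard Nemirovski--Yudin adaptive-rotation argument that the paper is citing, so your approach \emph{is} the paper's intended approach, just spelled out rather than deferred to the reference. The sketch is correct: the three ingredients you identify---orthogonal invariance of $\Pdelta$, the zero-chain property of $\tg$ from Lemma~\ref{lem:gttprop}, and the observation that the bound in Lemma~\ref{lem:zero-chain-log} depends only on $\supp(\bU\xx k)$---are precisely what the reduction needs, and the constant-factor loss from committing two rows per query is absorbed in the $\Omega(\cdot)$.
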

\begin{proof}
    The set $\Pdelta$ satisfies orthogonal invariance property \citep{bassily2018exponential}. Therefore, the results follows from Proposition 1 of \citet{carmon2020lower}.
\end{proof}

Finally, we arrive at a lower bound for first-order algorithms:

\newcounter{thm5}
\setcounter{thm5}{\value{theorem}}
\begin{theorem}
    For any $0<a<1$, when $\eps \le \frac{1}{16}\Delta$, 
    \begin{equation}
        \mathcal T_\eps\left(\Afirst, \Pdelta \right)\ge \Omega\left(\kappa\log\frac{1}{\eps}\right).
        \label{equ:Afirstbound-tight}
    \end{equation}
\end{theorem}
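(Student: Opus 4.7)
The plan is to combine the zero-respecting lower bound in Theorem \ref{thm:mainlog} with the reduction in Lemma \ref{lem:ls}. Theorem \ref{thm:mainlog} already furnishes a hard instance $\tg$ that is $L$-smooth, $\mu$-PL, and defeats every zero-respecting algorithm in the \emph{relative-error} sense $\tg(\x) - \tg^* \le \eps(\tg(\xx 0) - \tg^*)$. Two small gaps remain: placing $\tg$ inside $\Pdelta$ by controlling its initial suboptimality, and converting the relative-error bound into the absolute-error bound $\tg(\x) - \tg^* \le \eps$ required here.

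First I would re-examine the scaling in \eqref{equ:tgref}. Tracking the calculation in the proof of Lemma \ref{lem:zero-chain-log} gives $\gtt(\y) \le 3T$, so after the prefactor $LT^{-1}D^2/37$ one has $\tg(\mathbf 0) - \tg^* \le \tfrac{3LD^2}{37}$. I would choose $D$ so that this upper bound equals $\Delta$; equivalently $D^2 = 37\Delta/(3L)$. Crucially, this choice leaves the smoothness and PL constants untouched, since those are pinned entirely by $T$ in the proof of Theorem \ref{thm:mainlog} through the $T D^{-2}$ factor in the chain rule. Thus $\tg \in \Pdelta$, and the other two properties from Lemma \ref{lem:gttprop} survive the rescaling verbatim.

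Second, I would apply Lemma \ref{lem:zero-chain-log} with relative accuracy $\eps' := \eps/(\tg(\xx 0) - \tg^*) = \Theta(\eps/\Delta)$. Under the hypothesis $\eps \le \Delta/16$ this $\eps'$ is bounded away from $1$ by a universal constant, so $t = 2\lfloor \log_{8/7}(3/(2\eps')) \rfloor = \Theta(\log(\Delta/\eps))$, which is $\Theta(\log(1/\eps))$ up to an additive $\log \Delta$ that can be absorbed into the implicit constant. Lemma \ref{lem:zero-chain-log} then certifies that no zero-respecting first-order algorithm can achieve $\tg(\x) - \tg^* \le \eps$ in fewer than $\tfrac{1}{2} T t = \Omega(\kappa \log(1/\eps))$ queries, so $\mathcal T_\eps(\Als, \Pdelta) \ge \Omega(\kappa \log(1/\eps))$. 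Finally Lemma \ref{lem:ls} immediately lifts this to $\mathcal T_\eps(\Afirst, \Pdelta) \ge \mathcal T_\eps(\Als, \Pdelta)$, completing the proof. The only real obstacle is the careful bookkeeping of three rescalings (prefactor, input dilation, and choice of $T$) to ensure that $L$-smoothness, $\mu$-PL, and $f(\xx 0) - f^* \le \Delta$ hold simultaneously; all three follow routinely from Lemma \ref{lem:gttprop} and the chain rule.
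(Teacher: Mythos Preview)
Your proposal is correct and follows the same route the paper takes: the paper states this theorem immediately after Lemma \ref{lem:ls} with no explicit proof, relying on the reader to combine Theorem \ref{thm:mainlog} with Lemma \ref{lem:ls}. You have simply filled in the two bookkeeping steps the paper leaves implicit---choosing $D$ so that $\tg(\mathbf 0)-\tg^*\le\Delta$ and passing from relative accuracy in Lemma \ref{lem:zero-chain-log} to the absolute accuracy defining $\mathcal T_\eps$---and both are handled correctly.

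One minor caution: Lemma \ref{lem:zero-chain-log} is stated under the hypothesis $\eps<0.01$, whereas your $\eps'=\eps/\Delta$ is only guaranteed to be at most $1/16$ under the theorem's assumption. This is not a real obstacle (the constant $0.01$ in that lemma is slack and could be loosened, or one could simply tighten the theorem's hypothesis to $\eps\le c\Delta$ for a smaller $c$), but you should flag it rather than gloss over it. Similarly, your remark that the additive $\log\Delta$ ``can be absorbed into the implicit constant'' is only safe when $\Delta\ge 1$; if $\Delta<1$ then $\log(\Delta/\eps)<\log(1/\eps)$ and you would need to be explicit that the $\Omega(\cdot)$ hides dependence on $\Delta$.
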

\begin{proof}
    The result is a direct corollary of Theorem \ref{thm:mainlog} and Lemma \ref{lem:ls}.
\end{proof}
This bound matches the convergence rate of Gradient Descent up to a constant.

\section{Numerical experiments}
\label{sec:numerical_experiments}
\begin{figure}[t]
    \centering
    \subfloat[$\kappa = 1.9709\times 10^6, \eps=10^{-10}$]{
      \label{sfig:hard}
      \includegraphics[height=0.2\textheight]{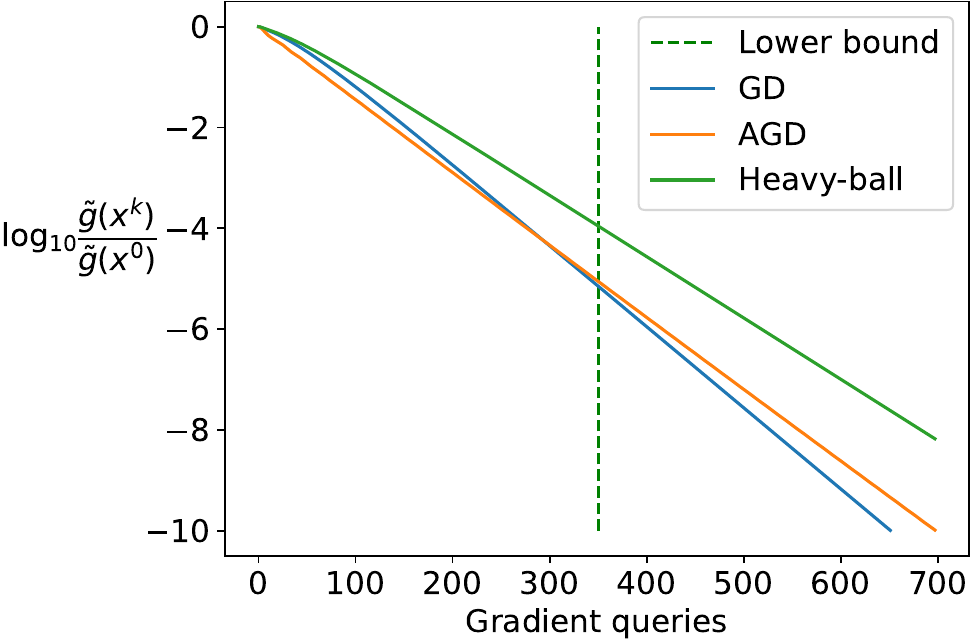}}
    \subfloat[$\kappa = 7.3119\times 10^{6}, \eps=10^{-20}$]{
      \label{sfig:hard2}
      \includegraphics[height=0.2\textheight]{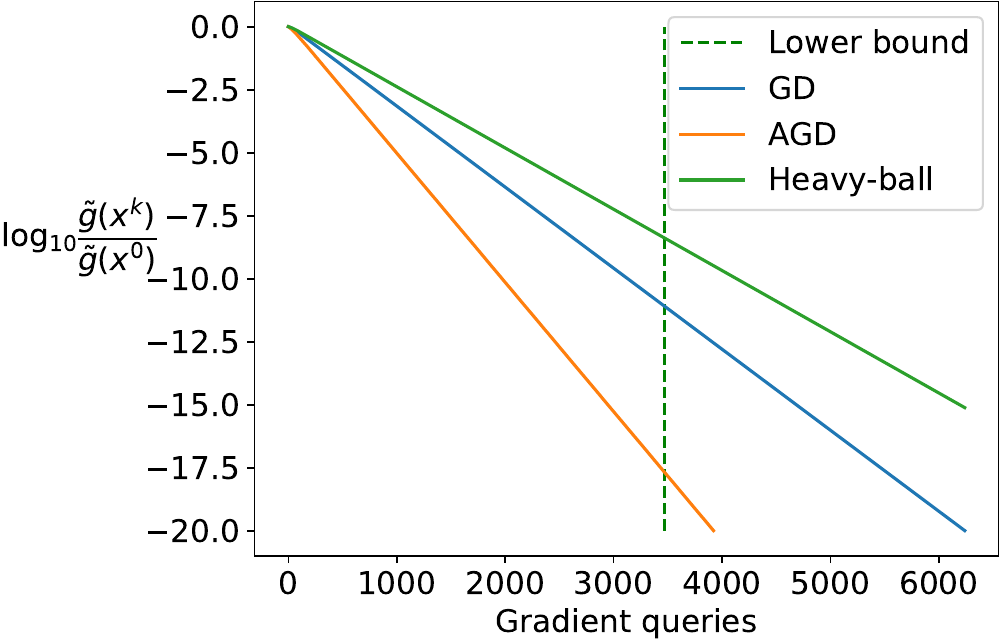}}
    \caption{Convergence rate under Gradient Descent, Nesterov's Accelerated Gradient Descent and Polyak's Heavy-ball Method}
    \label{fig:numerical}
\end{figure}
 We conduct numerical experiments  on our hard instance. We consider the $\kappa$ relatively large, which can reduce the factors from the numerical constants. We first choose $\kappa$ and $\eps$, and then decide $T$ and $t$ using \eqref{equ:gTdef} and \eqref{equ:gtdef}. We use Gradient Descent, Nesterov's Accelerated Gradient Descent (AGD) and Polyak's Heavy-ball Method to optimize the hard instance. As AGD and the Heavy-ball Method are designed for  convex functions, we need to choose appropriate parameter $\hat\mu$ in both algorithms, because our hard instance is non-convex. For AGD, We let $\hat\mu = \mu$, the PL constant of our hard instance. For Heavy-ball method, we adopt the parameter setting in \citep{danilova2020non}.

GD, AGD and Heavy-ball Method are all zero-respecting algorithms, so Lemma \ref{lem:zero-chain-log} and Theorem \ref{thm:mainlog} applies to their convergence rates. From Figure \ref{fig:numerical}, we observe that all three algorithms converge almost linearly, but the number of  greadient queries is more than the complexity lower bound. The result is consistent with Lemma \ref{lem:zero-chain-log} and Theorem \ref{thm:mainlog}.

\section{Conclusion}
We construct a lower complexity bound on optimizing smooth PL functions with first-order methods. A first-order algorithm needs at least $\Omega\left(\frac L\mu\log\frac{1}{\eps}\right)$ gradient access to find an $\eps$-approximate optimal point of an $L$-smooth $\mu$-PL function. Our lower bound matches the convergence rate of Gradient Descent up to constants. 

We only focus on deterministic algorithms in this paper.  We conjecture  that our results can be extended to randomized algorithms, using the same technique in \citet{nemirovskij1983problem} and explicit construction in \citet{woodworth2016tight} and \citet{woodworth2017lower}. We leave its formal derivation to the future work.

\subsubsection*{Acknowledgments}
% Use unnumbered third level headings for the acknowledgments. All
% acknowledgments, including those to funding agencies, go at the end of the paper.
C. Fang and Z. Lin were supported by National Key R\&D Program of China (2022ZD0160302). Z. Lin was also supported by the NSF China (No. 62276004), the major key project of PCL, China (No. PCL2021A12) and Qualcomm. C. Fang was also supported by Wudao Foundation. Thanks for  helpful discussions with Tong Zhang and Weijie Su.

\bibliographystyle{abbrvnat} % or try abbrvnat or unsrtnat

\bibliography{ref}

\if\app1

\appendix

\label{appendix}

\section{Proof of Theorem \ref{thm:linear}}

\newcounter{tt1}
\setcounter{tt1}{\value{theorem}}
\setcounter{theorem}{\value{tapp1}}
\begin{theorem}
    If $f$ is $L$-smooth and satisfies $\mu$-PL condition, then the Gradient Descent algorithm with a constant step-size $\frac1L$:
    \begin{equation}
        \xx{k+1} = \xx{k} - \frac1L \nabla f(\xx{k}),
        % \label{equ:GD}
    \end{equation}
    has a linear convergence rate. We have:
    \begin{equation}
        f(\xx{k})-f^* \le \left(1-\frac\mu L \right)^k(f(\xx{0}) - f^*).
    \end{equation}
    % \label{thm:linear}
\end{theorem}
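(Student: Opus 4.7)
The plan is to combine the standard descent lemma for $L$-smooth functions with the $\mu$-PL inequality, then iterate the resulting one-step contraction. This is the classical argument from \citet{karimi2016linear}, and the calculation is short; the only ``obstacle'' is bookkeeping the constants correctly so that the step size $1/L$ yields exactly the factor $(1-\mu/L)$.

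First, I would invoke the descent lemma that follows from $L$-smoothness: for all $\x,\y\in\R^d$,
\begin{equation*}
f(\y) \le f(\x) + \langle \nabla f(\x), \y-\x\rangle + \frac{L}{2}\|\y-\x\|^2.
\end{equation*}
Applying this with $\x=\xx{k}$ and $\y=\xx{k+1}=\xx{k}-\frac{1}{L}\nabla f(\xx{k})$, the inner product contributes $-\frac{1}{L}\|\nabla f(\xx{k})\|^2$ and the quadratic term contributes $\frac{1}{2L}\|\nabla f(\xx{k})\|^2$, so after cancellation I obtain the one-step progress bound
\begin{equation*}
f(\xx{k+1}) \le f(\xx{k}) - \frac{1}{2L}\|\nabla f(\xx{k})\|^2.
\end{equation*}

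Second, I subtract $f^*$ from both sides and upper-bound $-\|\nabla f(\xx{k})\|^2$ using the $\mu$-PL condition \eqref{equ:PLdef}, namely $\|\nabla f(\xx{k})\|^2 \ge 2\mu (f(\xx{k}) - f^*)$. This immediately yields the contraction
\begin{equation*}
f(\xx{k+1}) - f^* \le \Bigl(1-\frac{\mu}{L}\Bigr)\bigl(f(\xx{k}) - f^*\bigr).
\end{equation*}
Note that $\mu \le L$ is automatic because smoothness gives $f(\x)-f^* \le \frac{1}{2\mu_{\min}}\|\nabla f(\x)\|^2$ from one side while smoothness plus optimality gives the matching bound with $L$, so the ratio $1-\mu/L \in [0,1)$ and the recursion is indeed contractive.

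Finally, I would unroll the recursion by induction on $k$ to conclude
\begin{equation*}
f(\xx{k}) - f^* \le \Bigl(1-\frac{\mu}{L}\Bigr)^{k}\bigl(f(\xx{0}) - f^*\bigr),
\end{equation*}
which is the stated bound. No convexity is used anywhere in the argument — the PL inequality plays exactly the role that strong convexity plays in the analogous strongly convex analysis, and this is precisely what makes the weaker PL assumption sufficient for linear convergence of plain Gradient Descent.
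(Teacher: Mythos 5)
Your proof is correct and follows the same route as the paper: descent lemma from $L$-smoothness, substitute the step $-\frac{1}{L}\nabla f(\xx{k})$, invoke the PL inequality to convert $\|\nabla f(\xx{k})\|^2$ into $2\mu(f(\xx{k})-f^*)$, and unroll the one-step contraction. (Your parenthetical justification that $\mu\le L$ is stated somewhat loosely — the clean argument is that PL gives $\|\nabla f\|^2\ge 2\mu(f-f^*)$ while $L$-smoothness gives $\|\nabla f\|^2\le 2L(f-f^*)$ — but this aside is not needed for the theorem and does not affect the validity of the proof.)
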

\setcounter{theorem}{\value{tt1}}

\begin{proof}
    The proof is taken from \citet{karimi2016linear}.
    By the $L$-smoothness of $f$,
    \begin{equation}
        f(\xx{k+1})\le f(\xx{k})+\left\langle\nabla f(\xx k),\xx{k+1}-\xx{k} \right\rangle+\frac L2 \left\|\xx{k+1}-\xx{k} \right\|^2.
        \label{equ:thm1pf1}
    \end{equation}
    Applying \eqref{equ:GD} and \eqref{equ:PLdef} in \eqref{equ:thm1pf1}, we have
    \begin{equation}
        f(\xx{k+1})-f(\xx{k})\le \frac{1}{2L} -\left\| \nabla f(\xx{k})\right\|^2\le -\frac{\mu}{L}\left(f(\xx{k})-f^*\right).
        \label{equ:thm1pf2}
    \end{equation}
    Rearranging terms in \eqref{equ:thm1pf2}, we have
    \begin{equation}
        f(\xx{k+1})-f^*\le \left(1-\frac{\mu}{L}\right)\left(f(\xx{k})-f^* \right).
        \label{equ:thm1pf3}
    \end{equation}
    Applying \eqref{equ:thm1pf3} recursively gives the result.
\end{proof}

\section{Omitted proof in Section \ref{sec:main_result}}
\label{subsec:logepsproof}

% \subsection{Proof of Lemma \ref{lem:zero-chain-log}}
% \newcounter{lem5temp}
% \setcounter{lem5temp}{\value{theorem}}
% \setcounter{theorem}{\value{lem5}}
% \begin{lemma}
%     Assume that $\eps < 0.01$ and let $t = 2\left\lfloor\log_{\frac{8}{7}} \frac{3}{2\eps}\right\rfloor$. A first-order zero-respecting algorithm with $\xx 0 = \mathbf 0$ needs at least $\frac12Tt$ gradient computations to find a point $\x$ satisfying $\tg(\x) - \tg^*\le \eps(\tg (\xx 0) - \tg^*)$.
%     % \label{lem:zero-chain-log}
% \end{lemma}
% \setcounter{theorem}{\value{lem5temp}}

\subsection{Proof of Lemma \ref{lem:gttprop}}

\label{subsec:gttproof}
\newcounter{lem4temp}
\setcounter{lem4temp}{\value{theorem}}
\setcounter{theorem}{\value{lem4}}
\begin{lemma}
    $\gtt$ satisfies the following.
    \begin{enumerate}
        \item $\gtt(\y-\x)$ is a zero-chain.
        % \label{gprop0:zc}
        \item $\x^* = \mathbf 0$, $\gtt^* = 0$, $\gtt(\x)\le \frac12\x^T(\bB+\bI)\x$.
        % \label{gprop1:min}
        \item $\gtt$ is $37$-smooth.
        % \label{gprop2:smooth}
        \item $\gtt$ satisfies the $\frac{1}{C_3T}$-PL condition, where $C_3$ is a universal constant.
        % \label{gprop3:PL}
    \end{enumerate}
    % \label{lem:gttprop}
\end{lemma}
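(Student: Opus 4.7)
My plan is to address the four claims in order, with items 1--3 reducing to direct verification from the piecewise definitions and item 4 (the PL inequality) requiring substantial work.

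For item 1, the key insight is that $\y$ is a kind of ``fixed point'' of the problem: one checks from the formula that $v'_y(y)=0$, and a short case analysis on coordinate position (interior of a block, first index of a non-leading block where the $\tfrac{7}{8}$ coupling enters, last index of an intermediate block where the $\tfrac{113}{64}$ self-coefficient appears from combining the two adjacent squared differences, and the terminal coordinate $i=Tt$) establishes $\bB\y=\mathbf 0$; the geometric ratio $\tfrac78$ is calibrated exactly so that transitions across blocks cancel. With these two identities, setting $\z=\y-\x$ and noting that $\supp(\x)\subseteq\{1,\dots,k\}$ forces $\z_{i-1},\z_i,\z_{i+1}$ to agree with $\y$ for $i>k+1$ gives $(\nabla\gtt(\z))_i=(\bB\y)_i+v'_{\y_i}(\y_i)=0$ on those coordinates.

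For item 2, non-negativity of $v_y$ follows by checking that $v'_y\ge 0$ on $[0,\infty)$ (it equals $x$ on the outer pieces, $31(y-x)$ on $(\tfrac{31}{32}y,y]$, and $33(x-y)$ on $(y,\tfrac{33}{32}y]$) and $v'_y\le 0$ on $(-\infty,0]$, so $x=0$ is the unique global minimizer with value $0$; combined with $\qtt\ge 0$, this gives $\x^*=\mathbf 0$ and $\gtt^*=0$. The upper bound $v_y(x)\le\tfrac12 x^2$ is a short piecewise check; the only non-obvious piece is $(y,\tfrac{33}{32}y]$, where it reduces to $16(x-\tfrac{33}{32}y)^2\le\tfrac{y^2}{32}$, verified using $|x-\tfrac{33}{32}y|\le\tfrac{y}{32}$. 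Item 3 follows by combining $\|\bB\|\le 4$ (already noted in the paper via Gershgorin) with a Lipschitz constant of $33$ for $v'_y$ obtained from the piecewise derivative slopes $\{-31,1,33\}$ and the continuity of $v'_y$ at the transition points.

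Item 4 is the main obstacle. My plan is to write $\gtt(\x)=\tfrac12\x^T(\bB+\bI)\x-\sum_i w_{\y_i}(\x_i)$, where $w_y(x):=\tfrac12 x^2-v_y(x)\ge 0$ is a non-negative bump supported in $[\tfrac{31}{32}y,\tfrac{33}{32}y]$ with depth at most $\tfrac{y^2}{32}$, and correspondingly $\nabla\gtt(\x)=(\bB+\bI)\x-\bb(\x)$ with $\bb(\x)$ supported on the same intervals and $|\bb(\x)_i|\le\y_i$. Since $\lambda_{\min}(\bB+\bI)\ge 1$, when $\bb(\x)$ is small we obtain a $\Theta(1)$ PL inequality directly from item 2. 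The hard case is when many coordinates sit inside a bump window: here $\gtt(\x)\le\tfrac12\x^T(\bB+\bI)\x\le\tfrac52\|\x\|^2$, while $\|\nabla\gtt(\x)\|$ must be shown to be $\Omega(\|\x\|/\sqrt T)$ even when $\bb(\x)$ is tuned to cancel the leading $(\bB+\bI)\x$ contribution. I expect the key step to be a block-wise argument exploiting the chain structure of $\bB$ together with the normalization $\|\y\|^2=\Theta(T)$ coming from the geometric sum $\sum_q T(7/8)^{2q}$, showing that any such forcing $\bb(\x)$ can at best absorb $(\bB+\bI)\x$ up to a relative error of order $1/\sqrt T$ in norm. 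This block-balancing is where the $1/T$ factor in the PL constant arises and is the step I expect to demand the most care.
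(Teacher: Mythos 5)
Items 1--3 follow the paper's reasoning, with one small slip: $\bB\y$ is \emph{not} the zero vector. Because of the convention $\x_0=0$, the block of $\qtt$ indexed by $i=0$ contributes the term $\tfrac12\x_1^2$, so $(\bB\y)_1=2\y_1-\y_2=1\ne 0$; what you actually need (and what is true, by the $\tfrac78$-calibration you describe) is $(\bB\y)_k=0$ for $k\ge 2$, which together with $v'_{\y_i}(\y_i)=0$ is enough to close the zero-chain argument since the relevant indices satisfy $i>k+1\ge 2$. Items 2 and 3 match the paper's proof (minimizers of $v_y$ and $\qtt$, the piecewise bound $v_y\le\tfrac12 x^2$, and the Lipschitz constants $\|\bB\|\le 4$ and $33$ for $v'_y$ summing to $37$).

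Item 4 is the substance of the lemma and you have not proved it -- you explicitly leave it at ``I expect the key step to be\ldots''. Moreover, the plan you sketch is not the mechanism the paper uses, and I do not see how to make it work as stated. You propose to show a global relative-error bound of the form $\|(\bB+\bI)\x-\bb(\x)\|\ge c\,T^{-1/2}\|(\bB+\bI)\x\|$; but $\bb(\x)$ is an adversarial, componentwise-bounded forcing supported wherever $\x_i$ lies in a bump window, and a single norm inequality of this type gives no handle on configurations where only a subset of coordinates sit near their windows while others do not. The paper's proof is instead a \emph{coordinate-wise} argument: it introduces the predicate $\dominate(k)$ (true when $|(\nabla\gtt(\x))_k/\tx_k|>0.19$ or $\tx_k=0$), proves via Lemmas~\ref{lem:dom}--\ref{lem:dom2} that every non-dominating index can be chained by an operator $\Next$ along which $\tz_k=\tx_k/\ty_k$ grows geometrically until a dominating index is reached, with a separate fallback ($\UBindex$) for indices trapped in the bump window; it then builds per-coordinate charges $\UB(n)$ summing to at most $\|\nabla\gtt(\x)\|^2$ with $\UB(n)/\x_n^2\gtrsim\min\{1,1/T\}$. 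The $1/T$ appears because up to $\Theta(T)$ fallback indices may charge the same dominating ancestor, weighted by the geometric sum $\sum_m(\ty_m/\ty_n)^2=\Theta(T)$. Your intuition about $\|\y\|^2=\Theta(T)$ and the $7/8$ decay correctly identifies \emph{why} the $1/T$ is unavoidable (evaluating the PL ratio at $\x=\y$ already gives $\|\nabla\gtt(\y)\|^2/\gtt(\y)=\Theta(1/T)$), but turning that into a uniform lower bound over all $\x$ is exactly the hard combinatorial work that the $\dominate$/$\Next$/$\UBindex$ machinery performs and that your proposal does not supply.
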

\setcounter{theorem}{\value{lem4temp}}

\begin{proof}
    In the proof of Lemma \ref{lem:gttprop}, we define
    \begin{equation}
        \bb(\x) = \begin{bmatrix}b_{\y_1}(\x_1)\\\vdots\\b_{\y_{Tt}}(\x_{Tt})\end{bmatrix}.
    \end{equation}
    \begin{enumerate}
        \item We have
        \begin{equation}
            \nabla \gtt(\y-\x) = -(\bB(\y-\x)+(\y-\x)-\bb(\y-\x)).
        \end{equation} 
        When $\x_i = \cdots = \x_{Tt} = 0$, $b_{\y_j}(\y_j - \x_j) = \y_j$ for $j\ge i$. When $k\ge i+1$, $(\nabla \gtt(\x))_k = (\bB(\y-\x))_k = 0$. Therefore, $\supp \left(\nabla \gtt(\y-\x)\right)\in \{1,2,\cdots,i+1\}$, which implies that $\gtt(\y-\x)$ is a zero-chain with respect to $\x$.
        \item $\qtt(\x)$ attains its minimum at $\x = \mathbf 0$, and $\vy(x)$ attains its minimum at $x=0$. Therefore, $\gtt(\x) = \qtt(\x) +\sum_{t=1}^{Tt} v_{\y_i}(\x_i)$ attains its minimum at $\x = \mathbf 0$, and $\gtt^* = 0$. From the definition of $\vT$ in \eqref{equ:vydef}, we have $\vy(x)\le \frac12 x^2$, which implies $\gtt(\x)\le \frac12\x^T(\bB+\bI)\x$.
        \item Let 
        \begin{equation}
            v(\x) = \sum_{i=1}^{Tt} v_{\y_i}(\x_i).
        \end{equation}
        For $\x_1,\x_2\in \R^{Tt}$, 
        \begin{equation}
            \begin{aligned}
                \|\nabla  v(\x_1)-\nabla v(\x_2)\| &= \|\x_1-\bbT(\x_1)-\x_2+\bbT(\x_2)\|\\
                &\le \|\x_1-\x_2\|+\|\bbT(\x_1)-\bbT(\x_2)\|\\
                &\le \modify{33}\|\x_1-\x_2\|.
            \end{aligned}
            \label{equ:glipschitz}
        \end{equation}
        The last inequality of \eqref{equ:glipschitz} is due to the definition of $\bT$, which implies that $\bT$ is \modify{$32 $}-Lipschitz. Consequently, $\gtt$ is \modify{$37$-smooth} because $\bB\preceq\modify{4}\bI$.
        \item The  PL constant of $\gtt$ can be written as
        $$
            \mu = \inf_{\x\in \R^{Tt}} \frac{\|\nabla \gtt(\x)\|^2}{2(\gtt(x)-\gtt^*)} \ge \inf_{\x\in \R^{Tt}} \frac{\|\nabla \gtt(\x)\|^2}{5\|\x\|^2}.
        $$
        For any $\x$, we estimate $\mu$ by dividing the indices of $\x$ into three sets $A,B,C$ using Lemmas \ref{lem:dom} and \ref{lem:into1}: $A$ contains $k$ such that $|(\nabla \gtt(\x))_k|\ge 0.19|\x_k|$; $B$ contains ``exponential growing chains''; $C$ contains ``flat areas in $\left[\frac{31}{32},\frac{33}{32}\right]$''. Intuitively, if $k\in A$, then $|(\nabla \gtt(\x))_k|^2$ is  large enough, and it can be used to upper bound $|\x_k|^2$ and the norm of ``exponential growing chains'' and ``flat areas in $\left[\frac{31}{32},\frac{33}{32}\right]$'' next to $k$ (with Lemma 10). In the set $B$, $\Next$ defines the increasing direction of ``exponential growing chains''. We use $|(\nabla \gtt(\x))_{\UBindex(k)}|^2$ to upper bound $|\x_k|^2$. The intuitions are shown in Figure \ref{fig:illustration}.

        \begin{figure}[t]
        \centering
        \includegraphics[width=0.95\textwidth]{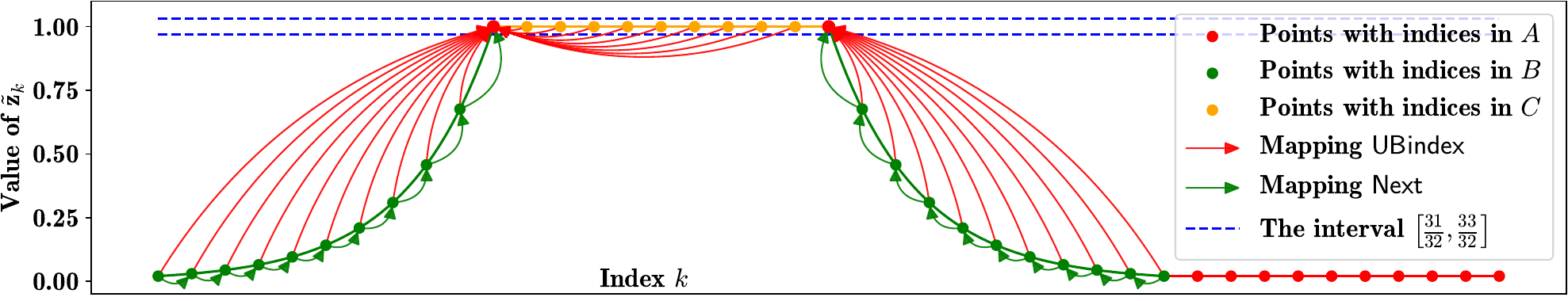}  
        \caption{An example showing the intuition of our notations and how to estimate the PL constant.}
        \label{fig:illustration}
        \end{figure}
        
        We first introduce some notations to simplify the proofs. We introduce $\tilde \x$ and $\tilde \z$ in \eqref{equ:eq37}, \eqref{equ:eq38} and \eqref{equ:eq39} to show that the ``exponential growing chains'' will terminate at the beginning and end of $\{\z_k\}_{k=1}^{Tt}$. It also simplifies the Lemmas we introduce later. For $\x\in \R^{Tt}$, let
        \begin{equation}
            \tx = \begin{bmatrix}0\\
            \x\\
            \x_{Tt}
            \end{bmatrix}\in \R^{Tt+2},
            \label{equ:eq37}
        \end{equation}
        \begin{equation}
            \tB = \begin{bmatrix}-\ee{1}&\bB+\EE{Tt,Tt}&-\ee{Tt}\end{bmatrix}\in \R^{Tt\times(Tt+2)},
            \label{equ:eq38}
        \end{equation}
        and
        \begin{equation}
            \tI = \begin{bmatrix}\mathbf 0&\bI_{Tt}&\mathbf 0\end{bmatrix}\in \R^{Tt\times(Tt+2)},
            \label{equ:eq39}
        \end{equation}
        where $\ee{i}$ is the $i$th \modify
        {column} of $\bI$\modify{, and $\EE{Tt,Tt}$ is an $Tt\times Tt$ whose $(Tt,Tt)$ entry is $1$ and other entries are $0$}. We have $\bB\x = \tB\tx$, and $\x = \tI\tx$. We use $\tx_0,\cdots,\tx_{Tt+1}$ to denote the coordinates of $\tx$, i.e. $\tx_0 = 0$, $\tx_i = \x_i$ for $i=1,\cdots, Tt$, and $\tx_{Tt+1} = \x_{Tt}$. Similarly, we define $\ty$ by $\ty_0 = 1$, $\ty_i=\y_i$ for $i=1,\cdots,Tt$, and $\ty_{Tt+1} = \y_{Tt}$. With these newly defined notations, we can check that
        \begin{equation}
            \begin{aligned}
            \left(\nabla \gtt(\x)\right)_k &= (\tB\tx)_k +\tx_k- b_{\ty_k}(\tx_k)\\
            \end{aligned}
        \end{equation}
        for \modify{$k=1,2,\cdots,Tt$}.

        We define $C_3 = \frac{21344400}{1083}$. Let $\mu$ be the PL constant of $\gtt$. 
        \begin{equation}
            \begin{aligned}
            \mu &= \inf_{\x\in \R^{Tt}} \frac{\|\nabla \gtt(\x)\|^2}{2(\gtt(x)-\gtt^*)}\\
            &= \inf_{\x\in\R^{Tt}}\frac{\|\nabla \gtt(\x)\|^2}{2\gtt(\x)}\\
            &\stackrel{(a)}{\ge} \inf_{\x\in\R^{Tt}}\frac{\|(\bB+\bI)\x-\bbT(\x)\|^2}{\x^T(\bB+\bI)\x}\\
            &\stackrel{(b)}{\ge} \inf_{\x\in\R^{Tt}}\frac{\|(\bB+\bI)\x-\bbT(\x)\|^2}{5\|\x\|^2}\\ 
            \end{aligned}
        \end{equation}
        where $(a)$ follows from property \ref{gprop1:min} of Lemma \ref{lem:gttprop}, and $(b)$ follows from $\bB+\bI \preceq 5\bI$. Define 
        \begin{equation}
            \tilde{\mu}(\x) =\frac{\|\nabla \gtt(\x)\|^2}{5\|\x\|^2}= \frac{\|(\bB+\bI)\x-\bbT(\x)\|^2}{5\|\x\|^2}.
        \end{equation}
        We only need to give a lower bound on $\tilde{\mu}$. We estimate the proportion \\$\frac{\|\nabla \gtt(\x)\|}{\|\tx_k\|} = \frac{\|(\bB+\bI)\x-\bbT(\x)\|}{\|\x\|}$ by computing each $\left|\frac{(\nabla \gtt(\x))_k}{\tx_k} \right| = \left|\frac{((\bB+\bI)\x)_k-b_{\ty_k}(\tx_k)}{\tx_k} \right|$. If $\left|\frac{(\nabla \gtt(\x))_k}{\tx_k} \right|$ is small, we will upper bound $\tx_k^2$ by one of the nearby $(\nabla\gtt(\x))_n^2$ terms.
        
        We define an operator $\dominate: \{1,\cdots, Tt\}\to \{\true, \false\}$ as follows:
        \begin{equation}
            \dominate(k)=\begin{cases}
                \true, & \mathrm{If }\left|\frac{(\nabla \gtt(\x))_k}{\tx_k} \right| > 0.19 \text{ or } \tx_k=0,\\
                \false, & \mathrm{otherwise}.
            \end{cases}
        \end{equation}
        Define $\tz_k = \frac{\tx_k}{\ty_k}$, we present four auxiliary lemmas below. We prove them in Section \ref{subsec:auxproof}.
        \newcounter{app7}
        \setcounter{app7}{\value{theorem}}
        \begin{lemma}
            For $k=1,\cdots,Tt$, if $\tz_k\notin [\frac{31}{32}, \frac{33}{32}]$ and $\neg \dominate(k)$, then $\frac{\tz_{k-1}}{\tz_k}\ge \frac{4}{3}$ or $\frac{\tz_{k+1}}{\tz_k}\ge \frac{4}{3}$.
            \label{lem:dom}
        \end{lemma}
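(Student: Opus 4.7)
The plan is to exploit two observations. First, when $\tz_k\notin[31/32,33/32]$ the nonconvex bump $b_{\ty_k}(\tx_k)$ vanishes by the definition in \eqref{equ:bydef}, so $(\nabla\gtt(\x))_k$ collapses to the purely linear expression $((\bB+\bI)\x)_k$. Second, the hypothesis $\neg\dominate(k)$ then reads $|((\bB+\bI)\x)_k/\tx_k|\le 0.19$, with $\tx_k$ nonzero by the same convention. Dividing by $\tx_k$ converts the inequality into one involving only the ratios $r_1:=\tz_{k-1}/\tz_k$ and $r_2:=\tz_{k+1}/\tz_k$, and the desired conclusion will follow by contradiction: if both ratios were strictly below $4/3$, the resulting linear combination of $r_1,r_2$ would violate the bound.

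The main step is to unfold $((\bB+\bI)\x)_k/\x_k$ from the explicit form of $\qtt$. For an index $k$ that is not a multiple of $T$, $\x_k$ appears in exactly two squared differences from a single block; within a block $\ty$ is constant, and at the cross-block position $k=iT+1$ the $7/8$ coefficient in $(\tfrac78\x_{iT}-\x_{iT+1})^2$ cancels precisely against the ratio $\ty_{iT}/\ty_{iT+1}=8/7$. This yields the uniform expression $3-r_1-r_2$, with the convention $\tz_0=0$ (from $\tx_0=0$) absorbing the endpoint $k=1$. For $k=iT$ with $1\le i\le t-1$, the cancellation is only partial and the expression becomes $\tfrac{177}{64}-r_1-\tfrac{49}{64}r_2$. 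Finally, for $k=Tt$ only one squared difference involves $\x_{Tt}$, giving $2-r_1$; by definition $\tz_{Tt+1}=\tz_{Tt}$ so $r_2=1$, and the conclusion can only come from $r_1$.

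With these expressions in hand, the contradictions reduce to arithmetic. In the interior case $|3-r_1-r_2|\le 0.19$ forces $r_1+r_2\ge 2.81$, whereas $r_1,r_2<4/3$ would give $r_1+r_2<8/3<2.81$. In the cross-block case, $|\tfrac{177}{64}-r_1-\tfrac{49}{64}r_2|\le 0.19$ combined with $r_1,r_2<4/3$ would yield $r_1+\tfrac{49}{64}r_2<\tfrac{4}{3}\cdot\tfrac{113}{64}=\tfrac{113}{48}\approx 2.354$, which is strictly below the required $\tfrac{177}{64}-0.19\approx 2.576$. In the final-index case, $|2-r_1|\le 0.19$ directly gives $r_1\ge 1.81>4/3$. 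In every case at least one of $r_1,r_2$ exceeds $4/3$, which is the statement of the lemma.

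The main obstacle is handling the boundary $k=iT$ correctly. The coefficients $\tfrac{177}{64}$ and $\tfrac{49}{64}$ come from two competing sources, the $7/8$ inside $\qtt$ and the matching jump $\ty_{iT+1}/\ty_{iT}=7/8$ in the reference vector $\y$, and the cancellation is only partial there. It is the specific numerical identity $\tfrac{4}{3}\bigl(1+\tfrac{49}{64}\bigr)=\tfrac{113}{48}<\tfrac{177}{64}-0.19$ that keeps the same $4/3$ threshold valid; this is by design, since the geometric rate $7/8$ was chosen so that the per-index inequalities at every boundary yield the same conclusion. Care is also needed at the endpoints $k=1$ and $k=Tt$, where the natural conventions $\tz_0=0$ and $\tz_{Tt+1}=\tz_{Tt}$ force the conclusion onto whichever ratio is actually present.
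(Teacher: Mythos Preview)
Your proposal is correct and follows essentially the same route as the paper's proof: reduce $(\nabla\gtt(\x))_k$ to the linear term $((\bB+\bI)\x)_k$ via $b_{\ty_k}(\tx_k)=0$, divide through by $\tx_k$, and do a short case analysis on the position of $k$ relative to the block boundaries to force one ratio past $4/3$. The only cosmetic differences are that you merge the $k=iT+1$ case with the generic case via the $7/8$--$8/7$ cancellation and you treat $k=Tt$ separately, whereas the paper keeps $k=iT+1$ as its own case and absorbs $k=Tt$ through the convention $\tx_{Tt+1}=\tx_{Tt}$.
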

        \newcounter{app8}
        \setcounter{app8}{\value{theorem}}
        \begin{lemma}
            If $\tz_{k-1}<\frac{5}{7}$, $\tz_{k}\in[\frac{31}{32}, \frac{33}{32}]$, $\tz_{k+1}\le \frac{33}{32}$, then $\dominate(k)=\true$.
            \label{lem:into1}
        \end{lemma}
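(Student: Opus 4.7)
The plan is to show that, under the three hypotheses, $|(\nabla \gtt(\x))_k / \tx_k| > 0.19$. I would begin from the decomposition
\begin{equation*}
(\nabla \gtt(\x))_k = (\tB\tx)_k + \bigl(\tx_k - b_{\ty_k}(\tx_k)\bigr),
\end{equation*}
separating the quadratic contribution from the nonsmooth separable part. Since $\tz_k \in [31/32, 33/32]$ puts $\tx_k$ in the nonsmooth region of $v_{\ty_k}$, a direct expansion of the piecewise definition of $b_{\ty_k}$ gives $\tx_k - b_{\ty_k}(\tx_k) = 33\ty_k(\tz_k-1)$ when $\tz_k \ge 1$ and $31\ty_k(1-\tz_k)$ when $\tz_k < 1$; in either case this contribution is non-negative. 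Also $\tx_k = \tz_k \ty_k > 0$, so it suffices to show $(\tB\tx)_k / \tx_k > 0.19$.

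The next step is to compute $(\tB\tx)_k$ from the structure of $\qtt$, using the conventions $\tz_0 = 0$ and $\tz_{Tt+1} = \tz_{Tt}$ encoded in $\tB$ and $\tx$. Two sub-cases arise according to the block position of $k$: (i) middle and start-of-block indices, including the boundary indices $k=1$ and $k=Tt$, where $(\tB\tx)_k = \ty_k(2\tz_k - \tz_{k-1} - \tz_{k+1})$; (ii) end-of-block indices $k = iT$ with $1 \le i \le t-1$, where $(\tB\tx)_k = \ty_k\bigl(\tfrac{113}{64}\tz_k - \tz_{k-1} - \tfrac{49}{64}\tz_{k+1}\bigr)$. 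The asymmetric coefficients in (ii) arise from the inter-block coupling $\tfrac{7}{8}$ in $\qtt$.

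Dividing by $\tx_k = \tz_k \ty_k$, the ratio in each sub-case takes the form $\alpha - \beta\,\tz_{k-1}/\tz_k - \gamma\,\tz_{k+1}/\tz_k$, which is strictly decreasing in $\tz_{k-1}$ and $\tz_{k+1}$ and strictly increasing in $\tz_k$. Therefore the infimum over the feasible region is obtained by pushing $\tz_{k-1} \to 5/7$, $\tz_{k+1} = 33/32$, $\tz_k = 31/32$. In sub-case (i) this evaluates to $2 - 160/217 - 33/31 = 43/217 \approx 0.198$; in sub-case (ii) it evaluates to $\tfrac{113}{64} - \tfrac{160}{217} - \tfrac{1617}{1984} = 2962/13888 \approx 0.213$. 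Both strictly exceed $0.19$, so combining with the non-negative contribution from the $v$-part gives $|(\nabla\gtt(\x))_k/\tx_k| > 0.19$ and hence $\dominate(k) = \true$.

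The main obstacle is the narrow margin in sub-case (i), where the lower bound $43/217$ only barely exceeds the threshold $0.19$; this forces a careful verification of the monotonicity claim and precise handling of the fractional arithmetic. Sub-case (ii) is arithmetically more tedious because of the asymmetric coefficients, but numerically has a larger margin. The non-negativity of $\tx_k - b_{\ty_k}(\tx_k)$ is not strictly needed for the bound but provides a convenient safety cushion and also explains why the threshold $0.19$ can be chosen so tight without invalidating the lemma.
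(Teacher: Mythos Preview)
Your argument is correct and follows essentially the same route as the paper's proof: bound $(\nabla\gtt(\x))_k$ from below by $(\tB\tx)_k$ using the non-negativity of $\tx_k - b_{\ty_k}(\tx_k)$ on the interval in question, then compute $(\tB\tx)_k/\tx_k$ in the different block positions and plug in the extremal values of $\tz_{k-1},\tz_k,\tz_{k+1}$. The only cosmetic difference is that you collapse the paper's three cases into two by observing that the start-of-block case $k=iT+1$ yields the same expression $2\tz_k-\tz_{k-1}-\tz_{k+1}$ in the normalized $\tz$ coordinates as the generic case, and you state the monotonicity argument explicitly rather than substituting bounds directly.
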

        \newcounter{app9}
        \setcounter{app9}{\value{theorem}}
        \begin{lemma}
            If $\tz_{k-2}<\frac{31}{32}$, $\frac{5}{7}\le \tz_{k-1}<\frac{31}{32}$, $\tz_{k}\in [\frac{31}{32}, \frac{33}{32}]$, then $\dominate(k-1)=\true$.
            \label{lem:into2}
        \end{lemma}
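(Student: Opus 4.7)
The conclusion to prove is $\dominate(k-1)=\true$, which by definition asks for $|(\nabla\gtt(\x))_{k-1}/\tx_{k-1}|>0.19$ (the alternative $\tx_{k-1}=0$ is excluded because $\tz_{k-1}\ge 5/7>0$). My plan is first to simplify $(\nabla\gtt(\x))_{k-1}$ using the hypothesis $\tz_{k-1}<31/32$, which gives $\x_{k-1}<(31/32)\y_{k-1}$, so by the definition of $b_y$ in \eqref{equ:bydef} we have $b_{\y_{k-1}}(\x_{k-1})=0$, and therefore $(\nabla\gtt(\x))_{k-1}=((\bB+\bI)\x)_{k-1}$. The problem then reduces to bounding $((\bB+\bI)\x)_{k-1}/\x_{k-1}$ from below using only the ratios $\tz_{k-2},\tz_{k-1},\tz_k$.

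I would then split according to whether $k-1$ lies at a block boundary of $\qtt$. Recall the blocks are $\{iT+1,\ldots,(i+1)T\}$, internally chained, with the only cross-block coupling coming from the term $\frac12(\tfrac78\x_{iT}-\x_{iT+1})^2$. In the non-boundary case $k-1\notin\{T,2T,\ldots\}$, direct differentiation yields
$$\frac{(\nabla\gtt(\x))_{k-1}}{\x_{k-1}}=3-\frac{\tz_{k-2}+\tz_k}{\tz_{k-1}}.$$
The key observation is that this \emph{same} formula holds both when $k-1$ is a generic interior coordinate $iT+j$ with $j\ge 2$ and when $k-1=iT+1$: in the latter case the factor $7/8$ from the quadratic exactly cancels the block-transition ratio $\y_{iT}/\y_{iT+1}=8/7$ when converting $\x_{iT}/\x_{iT+1}$ into $\tz_{iT}/\tz_{iT+1}$. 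Using $\tz_{k-2}<31/32$, $\tz_k\le 33/32$, and $\tz_{k-1}\ge 5/7$, the ratio above is at most $(31/32+33/32)/(5/7)=14/5$, so the expression exceeds $3-14/5=1/5>0.19$.

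The remaining case $k-1=iT$ requires separate bookkeeping because the gradient at an end-of-block coordinate absorbs contributions from both the internal pair $(\x_{iT-1},\x_{iT})$ and the cross-block term. A direct computation yields
$$\frac{(\nabla\gtt(\x))_{iT}}{\x_{iT}}=\frac{177}{64}-\frac{\tz_{iT-1}}{\tz_{iT}}-\frac{49}{64}\cdot\frac{\tz_{iT+1}}{\tz_{iT}},$$
where $177/64=2+49/64$ comes from combining the two quadratic pieces and $49/64=(7/8)^2$ arises from the product of the $7/8$ in the quadratic with the ratio $\y_{iT+1}/\y_{iT}=7/8$. Plugging in the same bounds gives the lower bound $\frac{177}{64}-\frac{(31/32)+(49/64)(33/32)}{5/7}>0.3>0.19$. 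Combining the two cases yields $\dominate(k-1)=\true$.

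The main difficulty I anticipate is keeping track of the three possible positional types of $k-1$ (interior, first of block, last of block), and in particular verifying the cancellation that makes the first two behave identically, so the final estimate only has to deal with the single formula in the non-boundary case plus one extra computation at $k-1=iT$. The margin above $0.19$ is narrow in the interior case (roughly $0.01$), which suggests that the threshold $5/7$ on $\tz_{k-1}$ and the threshold $31/32$ on $\tz_{k-2}$ are chosen essentially tightly for this lemma.
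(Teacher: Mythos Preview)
Your proposal is correct and follows essentially the same approach as the paper's own proof: use $\tz_{k-1}<31/32$ to kill $b_{\y_{k-1}}(\x_{k-1})$, reduce $(\nabla\gtt(\x))_{k-1}/\tx_{k-1}$ to a linear expression in $\tz_{k-2}/\tz_{k-1}$ and $\tz_k/\tz_{k-1}$, split on whether $k-1$ is an end-of-block index $iT$, and plug in the bounds $\tz_{k-2}<31/32$, $\tz_k\le 33/32$, $\tz_{k-1}\ge 5/7$. Your treatment is in fact slightly more careful than the paper's: you explicitly verify that the first-of-block case $k-1=iT+1$ collapses to the generic interior formula via the $7/8\cdot 8/7$ cancellation, whereas the paper folds it into the ``other $k$'' case without comment.
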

        \newcounter{appA}
        \setcounter{appA}{\value{theorem}}
        \begin{lemma}
            If $\tz_n\in[\frac{31}{32},\frac{33}{32}]$, $\neg\dominate(n)$ and $\tz_{n-1}, \tz_{n+1}\le \frac{33}{32}$, there exist $k$ such that $k<n$, $\dominate(k)$ and $\tz_{k}>\frac{5}{7}$.
            \label{lem:dom2}
        \end{lemma}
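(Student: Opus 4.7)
The plan is to perform a backward walk from index $n$ and repeatedly invoke the contrapositives of Lemmas \ref{lem:into1} and \ref{lem:into2}, together with Lemma \ref{lem:dom}, to either produce the desired index $k$ directly or propagate constraints to an earlier index.

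First I would apply the contrapositive of Lemma \ref{lem:into1} at $n$: since $\neg\dominate(n)$, $\tz_n\in[\tfrac{31}{32},\tfrac{33}{32}]$, and $\tz_{n+1}\le\tfrac{33}{32}$, we get $\tz_{n-1}\ge\tfrac{5}{7}$. Combined with the hypothesis $\tz_{n-1}\le\tfrac{33}{32}$, this places $\tz_{n-1}$ in $[\tfrac{5}{7},\tfrac{33}{32}]$. Next I would branch on whether $\dominate(n-1)$ holds. In the ``yes'' subcase, $k=n-1$ is the desired index as long as $\tz_{n-1}>\tfrac{5}{7}$; when $\tz_{n-1}\in[\tfrac{31}{32},\tfrac{33}{32}]$ this is clear, and when $\tz_{n-1}\in[\tfrac{5}{7},\tfrac{31}{32})$ the strict inequality can be read off by re-examining the boundary in the proof of Lemma \ref{lem:into2}. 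In the ``no'' subcase I recurse: if $\tz_{n-1}\in[\tfrac{31}{32},\tfrac{33}{32}]$ then the hypotheses of the current lemma hold with $n-1$ in place of $n$ (using $\tz_n\le\tfrac{33}{32}$ for the upper-neighbor constraint), while if $\tz_{n-1}\in[\tfrac{5}{7},\tfrac{31}{32})$ then the contrapositive of Lemma \ref{lem:into2} forces $\tz_{n-2}\ge\tfrac{31}{32}$, after which a further branch on $\dominate(n-2)$ either closes the argument or returns us to the central interval.

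When the recursion leads to an index $m$ with $\tz_m\notin[\tfrac{31}{32},\tfrac{33}{32}]$, Lemma \ref{lem:dom} takes over: if $\neg\dominate(m)$ then one neighbor must satisfy $\tz_{m\pm 1}\ge\tfrac{4}{3}\tz_m$. Because the ``forward'' neighbor is already pinned by the previous recursion step to lie in a small interval, the growth is forced to the ``backward'' side, producing a chain $\tz_m,\tz_{m-1},\tz_{m-2},\ldots$ with consecutive ratios at least $\tfrac{4}{3}$. Such a chain cannot continue indefinitely: since $\tx$ has finite support, $\tx_k=0$ (and hence $\tz_k=0$) for all sufficiently small $k$, which is incompatible with geometric growth. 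The chain must therefore terminate at a dominated index, and by the time it terminates the accumulated growth has pushed $\tz$ well above $\tfrac{5}{7}$, supplying the required $k$.

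The main obstacle I expect is the case bookkeeping --- in particular pinning down the strict inequality $\tz_k>\tfrac{5}{7}$ (rather than only $\ge$) in the boundary configuration $\tz_{n-1}\in[\tfrac{5}{7},\tfrac{31}{32})$, and verifying that each recursive step preserves the upper-neighbor hypothesis $\tz_{\cdot+1}\le\tfrac{33}{32}$ needed to invoke Lemma \ref{lem:into1} at the new pivot.
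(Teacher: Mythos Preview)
Your backward-walk strategy is sound and the case split you sketch (via the contrapositives of Lemmas~\ref{lem:into1} and~\ref{lem:into2} inside the interval, and via Lemma~\ref{lem:dom} for geometric growth outside it) does work. There is, however, one genuine error in the termination step. You write that the geometric chain must stop ``since $\tx$ has finite support, $\tx_k=0$ for all sufficiently small $k$.'' That claim is false in this context: Lemma~\ref{lem:dom2} is part of the verification of the PL constant of $\gtt$, so it must hold for an \emph{arbitrary} $\x\in\R^{Tt}$, not an iterate of a zero-respecting algorithm; there is no support restriction on $\x$. The correct termination argument is structural: the index set is finite and bounded below by $0$, with $\tz_0=\tx_0/\ty_0=0$ by the convention $\tx_0=0$. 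Hence a backward chain with $\tz_{j-1}\ge\tfrac{4}{3}\tz_j>\tfrac{33}{32}$ cannot pass index $1$ (at $j=1$ the left neighbor vanishes while the right neighbor is strictly smaller, forcing $\dominate(1)$). Once you replace ``finite support'' with this, the argument closes. Your remark about re-examining the boundary to get the \emph{strict} inequality $\tz_k>\tfrac{5}{7}$ is fine; the computation in the proof of Lemma~\ref{lem:into1} (not~\ref{lem:into2}) has numerical slack, so the hypothesis $\tz_{k-1}<\tfrac{5}{7}$ there can be relaxed to $\le$, and the contrapositive then gives $>\tfrac{5}{7}$.

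For comparison, the paper's own proof is organized quite differently. Rather than a step-by-step backward recursion, it immediately sets $m=\max\{m'<n:\tz_{m'}\notin[\tfrac{31}{32},\tfrac{33}{32}]\}$ (which exists because $\tz_0=0$), and then branches once on the value of $\tz_m$: if $\tz_m<\tfrac{5}{7}$ it takes $k=m+1$ and invokes Lemma~\ref{lem:into1}; otherwise it takes $k=m$ when $\dominate(m)$, and when $\neg\dominate(m)$ it follows the $\Next$ chain (equivalently $\UBindex(m)$) to a dominated index. Your approach is more explicit about the case bookkeeping and avoids any reference to the auxiliary $\Next/\UBindex$ machinery, at the cost of a longer induction; the paper's is terser but leans on that machinery. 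Both routes ultimately rest on the same three lemmas and on $\tz_0=0$.
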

        Note that if we alter the ``$+$'' and ``$-$'' sign in Lemma \ref{lem:into1} or \ref{lem:into2}, the conclusion still holds. 

        Now we define an operator $\Next$ on indices on which $\dominate$ operator is false. Intuitively, $\Next$ finds the direction in which $\tz$ grows exponentially.

        By Lemma \ref{lem:dom}, if $\tz_k\notin [\frac{31}{32},\frac{33}{32}]$ and $\neg\dominate(k)$, define $\Next(k)\in \{k-1,k+1\}$ to be one of the coordinate satisfying $\frac{\tz_{\Next(k)}}{\tz_k}\ge \frac{4}{3}$.  
        
        Next, we define how the $\Next$ operator acts on the index $n$ where $\tz_n\in [\frac{31}{32},\frac{33}{32}]$ and \\$\max\left\{\tz_{n-1},\tz_{n+1} \right\}> \frac{33}{32}$. Without the loss of generality (if we alter the ``$+$'' and ``$-$'', the following conclusion still holds), if $\tz_{n+1}>\frac{33}{32}$, we define $\Next(n) = n+1$. If $\Next(n-1)=n$, $\tz_n\in [\frac{31}{32}, \frac{33}{32}]$ and $\neg\dominate(n)$, then by Lemma \ref{lem:into1}, we have $\tz_{n+1}\ge \frac{33}{32}$. Therefore, if $n=\Next(m)$, then $\Next(n)$ is defined. We can apply the $\Next$ operator recursively, and will finally reach a index $n$ such that $\dominate(n) = \true$. This process will terminate because $\tz_0 = 0$ and $\tz_{Tt+1} = \tz_{Tt}$, ensuring that if the recursive $\Next$ operation reaches $0$ or $Tt$, it terminates.
        
        For other $n$ such that $\neg\dominate(n)$ and $\tz_n\in [\frac{31}{32},\frac{33}{32}]$ ($\tz_{n-1}, \tz_{n+1}\le \frac{33}{32}$), the operator $\Next$ is undefined. We will use Lemma \ref{lem:dom2} to tackle this situation.

        For $n$ such that $\neg\dominate(n)$, we define an operator $\UBindex$, and use a proportion of $\|\gtt(\x)\|_{\UBindex(n)}$ to upper bound $\x_n$. The process of finding $\UBindex$ is provided in Algorithm \ref{alg:UB}.

        % \begin{algorithm}[t]
        %     \caption{Algorithm to find $\UBindex(n)$}\label{alg:UB}
        %     % \begin{algorithmic}[1]
        %     \KwIn{$\neg\dominate(n)$}
        %     \uIf{$\Next(n)$ is defined}{
        %         {$m\gets n$};\\
        %         \While{$\neg\dominate(m)$}{
        %             {$m\gets\Next(m)$};\Comment{This process is well-defined and will terminate.}
        %         }
        %         {$\UBindex(n)\gets m$};
        %     }
        %     \ElseIf{$\Next(n)$ is not defined}{
        %         {$\UBindex(n)\gets\argmax_{k<n} \left\{\tz_k:\dominate(k) \right\}$};
        %         \Comment{$\UBindex(n)$ exists and $\tz_{\UBindex(n)} > \frac{5}{7}$ (by Lemma \ref{lem:dom2}).}
        %     }
        %     % \end{algorithmic}
        % \end{algorithm}
        \begin{algorithm}[t]
            \caption{Algorithm to find $\UBindex(n)$}\label{alg:UB}
            \begin{algorithmic}[1]
            \Require $\neg\dominate(n)$ 
            \If{$\Next(n)$ is defined}
                \State{$m\gets n$}
                \While{$\neg\dominate(m)$}
                    \State{$m\gets\Next(m)$}\Comment{This process is well-defined and will terminate.}
                \EndWhile
                \State{$\UBindex(n)\gets m$}
            \ElsIf{$\Next(n)$ is not defined}
                \State{$\UBindex(n)\gets\argmax_{k<n} \left\{\tz_k:\dominate(k) \right\}$.}
                \Comment{$\UBindex(n)$ exists and $\tz_{\UBindex(n)} > \frac{5}{7}$ (by Lemma \ref{lem:dom2}).}
            \EndIf
            \end{algorithmic}
        \end{algorithm}
        Define $\Dist(n)=|n-\UBindex(n)|$, and define $\UB(k)$ to be a proportion of $\|\gtt(\x)\|_{\UBindex(k)}^2$ as follows:
        \begin{equation}
            \UB(n) = \begin{cases}
                \frac{1}{4}|(\nabla\gtt(\x))_n|^2, \\
                \qquad\qquad\qquad\qquad\qquad\dominate(n),\\
                \frac{1}{4}\cdot\frac{13}{49}\cdot\left(\frac{36}{49}\right)^{\Dist(n)-1}|(\nabla\gtt(\x))_{\UBindex(n)}|^2, \\
                \qquad\qquad\qquad\qquad\qquad\neg\dominate(n),\Next(n) \text{ exists,}\\
                \frac{1}{4T}\cdot\frac{15}{64}\cdot\frac{\ty_n^2}{\ty_{\UBindex(n)}^2}|(\nabla\gtt(\x))_{\UBindex(n)}|^2, \\
                \qquad\qquad\qquad\qquad\qquad\neg\dominate(n),\Next(n) \text{ does not exist}.
            \end{cases}
        \end{equation}
        Let $A = \{n:\dominate(n)\}$, $B = \{n:\neg\dominate(n), \Next(n)\text{ exists}\}$, and $C = \{n:\neg\dominate(n), \\\Next(n) \text{ does not exist}\}$. Now we calculate $\sum_{i=1}^{Tt} \UB(n)$, and show that it is smaller than $\|\gtt(\x)\|^2$.
        \begin{equation}
            \begin{aligned}
                \sum_{i=1}^{Tt} \UB(n) &= \sum_{n\in A}\UB(n) + \sum_{n\in B}\UB(n)+\sum_{n\in C}\UB(n)\\
                &= \frac{1}{4}\sum_{n\in A} |(\nabla\gtt(\x))_n|^2 \\
                &\quad +\sum_{n\in B} \frac{13}{196}\left(\frac{36}{49}\right)^{\Dist(n)-1}|(\nabla\gtt(\x))_{\UBindex}|^2\\
                &\quad +\sum_{n\in C} \frac{15}{256T}\left(\frac{\ty_n}{\ty_{\UBindex(n)}}\right)^{2}|(\nabla\gtt(\x))_{\UBindex}|^2.
            \end{aligned}
            \label{equ:division}
        \end{equation}
        By changing the order of summation, we have
        \begin{equation}
            \begin{aligned}
                &\quad \sum_{n\in B} \frac{13}{196}\left(\frac{36}{49}\right)^{\Dist(n)-1}|(\nabla\gtt(\x))_{\UBindex}|^2 \\
                &\le \frac{13}{196} \sum_{n\in A} 2\cdot\sum_{i=1}^{\infty} \left(\frac{36}{49}\right)^{i-1}|(\nabla\gtt(\x))_n|^2\\
                &= \frac{1}{2}\sum_{n\in A}|(\nabla \gtt(\x))_n|^2,
            \end{aligned}
            \label{equ:Bsum}
        \end{equation}
        and 
        \begin{equation}
            \begin{aligned}
                &\quad \sum_{n\in C} \frac{15}{256T}\left(\frac{\ty_n}{\ty_{\UBindex(n)}}\right)^{2}|(\nabla\gtt(\x))_{\UBindex}|^2\\
                &\le \frac{15}{256T}\sum_{n\in A}\sum_{m\ge n}\left(\frac{\ty_m}{\ty_n}\right)^2 |(\nabla \gtt(\x))_n|^2\\
                &= \frac{15}{256T}\sum_{n\in A}T\sum_{i=0}^\infty\left(\frac{7}{8}\right)^{2i} |(\nabla \gtt(\x))_n|^2\\
                &= \frac{1}{4}\sum_{n\in A}|(\nabla g(\x))_n|^2.
            \end{aligned}
            \label{equ:Csum}
        \end{equation}
        Summing up \eqref{equ:division}, \eqref{equ:Bsum} and \eqref{equ:Csum}, we have
        \begin{equation}
            \begin{aligned}
                \sum_{i=1}^{Tt}\UB(n) &= \frac{1}{4} \sum_{n\in A}|(\nabla \gtt(\x))_n|^2 + \frac{1}{2} \sum_{n\in A}|(\nabla \gtt(\x))_n|^2 + \frac{1}{4} \sum_{n\in A}|(\nabla \gtt(\x))_n|^2 \\
                &= \sum_{n\in A}|(\nabla \gtt(\x))_n|^2 \\
                &\le \|\nabla \gtt(\x)\|^2.
            \end{aligned}
        \end{equation}
        Finally, we calculate $\frac{\UB(n)}{\x_n^2}$ to give an universal lower bound of the PL constant. For $n\in A$, we have 
        \begin{equation}
            \begin{aligned}
                \frac{\UB(n)}{\x_n^2}&= \frac14\cdot\frac{|(\nabla \gtt(\x))_n|^2}{\x_n^2}\\
                &\stackrel{(a)}{\ge}\frac{361}{40000},\\
            \end{aligned}
        \end{equation}
        where $(a)$ is due to $\dominate(n)=\true$. 
        
        For $n\in B$, by Lemma \ref{lem:dom} we have $\left|\frac{\tz_{\Next(m)}}{\tz_{m}} \right|\ge\frac{4}{3}$ for $m=n,\Next(n),\Next(\Next(n))\cdots$, with only one possible exception when $\tz_m\in [\frac{31}{32},\frac{33}{32}]$, in which case there is $|\tz_{\Next(m)}|\ge|\tz_{m}|$. Therefore, $|\tz_{\UBindex(n)}|\ge\left(\frac{4}{3}\right)^{\Dist(n)-1}|\tz_n|$, so we have
        \begin{equation}
            \begin{aligned}
                \frac{\UB(n)}{\x_n^2}&= \frac{1}{4}\cdot\frac{13}{49}\cdot\left(\frac{36}{49}\right)^{\Dist(n)-1}|(\nabla\gtt(\x))_{\UBindex(n)}|^2\\
                &\stackrel{(a)}{\ge} \frac{13}{196}\cdot \left(\frac{36}{49}\right)^{\Dist(n)-1} \frac{361}{10000}\frac{\x_{\UBindex(n)}^2}{\x_n^2}\\
                &\stackrel{(b)}{=} \frac{4693}{1960000}\left(\frac{36}{49}\right)^{\Dist(n)-1}\cdot\left(\frac{\tz_{\UBindex(n)}}{\tz_n}\right)^2\cdot\left(\frac{\ty_{\UBindex(n)}}{\ty_n} \right)^2\\
                &\stackrel{(c)}{\ge}\frac{4693}{1960000}\left(\frac{36}{49}\right)^{\Dist(n)-1}\cdot \left(\frac{16}{9}\right)^{\Dist(n)-1}\cdot\left(\frac{49}{64}\right)^{\Dist(n)}\\
                &= \frac{4693}{2560000},
            \end{aligned}
        \end{equation}
        where $(a)$ is due to $\dominate(\UBindex(n))=\true$, $(b)$ is due to $\tz_m = \frac{\tx_m}{\ty_m}$, $(c)$ is due to $|\tz_{\UBindex(n)}|\ge\left(\frac{4}{3}\right)^{\Dist(n)-1}|\tz_n|$ and $\frac{|\ty_{m+1}|}{|\ty_m|}\ge \frac{7}{8}$.

        For $n\in C$, we have
        \begin{equation}
            \begin{aligned}
                \frac{\UB(n)}{\x_n^2} &= \frac{1}{4T}\cdot\frac{15}{64}\cdot\frac{\ty_n^2}{\ty_{\UBindex(n)}^2}\cdot\frac{|(\nabla\gtt(\x))_{\UBindex(n)}|^2}{\x_n^2}\\
                &\stackrel{(a)}{\ge}\frac{15}{256T}\cdot\frac{\ty_n^2}{\ty_{\UBindex(n)}^2}\cdot\frac{361\tx_{\UBindex(n)}^2}{40000\tx_n^2}\\
                &\stackrel{(b)}{\ge} \frac{1083}{2048000T}\frac{\tz_{\UBindex(n)}^2}{\tz_n^2}\\
                &\stackrel{(c)}{\ge}\frac{1083}{2048000T}\left(\frac{\sfrac{5}{7}}{\sfrac{33}{32}}\right)^2\\
                &=\frac{1083}{4268880T},
            \end{aligned}
        \end{equation}
    \end{enumerate}
    where $(a)$ is due to $\dominate(\UBindex(n))=\true$, $(b)$ is due to $\tz_m = \frac{\tx_m}{\ty_m}$, and $(c)$ is due to Lemma \ref{lem:dom2} and $\tz_n\in[\frac{31}{32},\frac{33}{32}]$.
    Therefore, 
    \begin{equation}
        \begin{aligned}
            \mu&\ge \inf_{\x}\frac{\|\nabla\gtt(\x)\|^2}{5\x^2}\\
            &\ge \inf_{\x}\frac{\sum_{n=1}^{Tt}\UB(n)}{5\sum_{n=1}^{Tt}\x_n^2}\\
            &\ge \inf_{\x}\min_{n} \frac{\UB(n)}{5\x_n^2}\\
            &\ge \inf_{\x} \frac{1}{5}\cdot\min\left\{\frac{361}{40000}, \frac{4693}{2560000}, \frac{1083}{4268880T}\right\}\\
            &= \frac{1083}{21344400T}.
        \end{aligned}
    \end{equation}
\end{proof}

\subsection{Proof of Lemma \ref{lem:dom}, \ref{lem:into1}, \ref{lem:into2} and \ref{lem:dom2}}
\label{subsec:auxproof}

\newcounter{temp7}
\setcounter{temp7}{\value{theorem}}
\setcounter{theorem}{\value{app7}}
\begin{lemma}
    For $k=1.\cdots.Tt$, if $\tz_k\notin [\frac{31}{32}, \frac{33}{32}]$ and $\neg \dominate(k)$, then $\frac{\tz_{k-1}}{\tz_k}\ge \frac{4}{3}$ or $\frac{\tz_{k+1}}{\tz_k}\ge \frac{4}{3}$.
\end{lemma}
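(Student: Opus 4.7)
The plan is to exploit the fact that $\tz_k \notin [\tfrac{31}{32}, \tfrac{33}{32}]$ places $\tx_k$ outside the support of the nonconvex bump $b_{\ty_k}$, so $b_{\ty_k}(\tx_k) = 0$ and $(\nabla\gtt(\x))_k = (\tB\tx)_k + \tx_k$ is a pure linear combination of $\tx_{k-1}, \tx_k, \tx_{k+1}$. Since $\dominate(k)$ holds by definition when $\tx_k = 0$, we may assume $\tx_k \ne 0$, and then $\neg\dominate(k)$ becomes a two-variable linear inequality on the ratios $\alpha := \tx_{k-1}/\tx_k$ and $\beta := \tx_{k+1}/\tx_k$.

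First I would read off the nontrivial row of $\tB$ from $\qtt$ and split into four positional cases, each with its own coefficient triple on $(\tx_{k-1}, \tx_k, \tx_{k+1})$: (i) $k$ in the interior of a block, where $(\nabla\gtt(\x))_k = 3\tx_k - \tx_{k-1} - \tx_{k+1}$; (ii) a block-start $k = iT + 1$ (including $k = 1$ via $\tx_0 = 0$), where the coupling $(\tfrac{7}{8}\x_{iT} - \x_{iT+1})^2$ drops the coefficient of $\tx_{k-1}$ to $\tfrac{7}{8}$; (iii) a block-end $k = iT$ with $i \ge 1$ and $k < Tt$, where the same coupling produces the diagonal $\tfrac{113}{64}$ and a $\tfrac{7}{8}$ on $\tx_{k+1}$; and (iv) the last coordinate $k = Tt$, which via the convention $\tx_{Tt+1} = \tx_{Tt}$ collapses to $(\nabla\gtt(\x))_{Tt} = 2\tx_{Tt} - \tx_{Tt-1}$.

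In each case $\neg\dominate(k)$ rearranges to a bound of the form $a_{k-1}\alpha + a_{k+1}\beta \ge a_k - 0.19$. The final step is to translate $\alpha, \beta$ into the target $\tz$-ratios $r_\pm := \tz_{k \pm 1}/\tz_k$ using $r_\pm = \alpha$ or $\beta$ scaled by $\ty_k/\ty_{k \pm 1} \in \{1, \tfrac{7}{8}, \tfrac{8}{7}\}$, according to whether the block index changes at $k \pm 1$. Assuming for contradiction that both $r_\pm < \tfrac{4}{3}$, I will verify by direct arithmetic in each case that the induced weighted sum falls strictly below the required threshold, forcing at least one $r_\pm \ge \tfrac{4}{3}$.

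The main obstacle is the bookkeeping at the block boundaries, where three different $\ty$-scalings can interact with three different coefficient triples of $\tB$. For the interior and block-start cases the scalings cancel and the inequality reduces to $r_- + r_+ \ge 2.81$, which easily contradicts $r_- + r_+ < \tfrac{8}{3} \approx 2.67$. The block-end case is the tightest: the condition becomes $r_- + \tfrac{49}{64} r_+ \ge \tfrac{177}{64} - 0.19 \approx 2.58$, whereas $r_\pm < \tfrac{4}{3}$ caps the left-hand side at $\tfrac{4}{3}(1 + \tfrac{49}{64}) = \tfrac{113}{48} \approx 2.35$, still a safe contradiction. The edge cases $k = 1$ and $k = Tt$ involve only one variable (e.g.\ $\alpha \ge 1.81$ for $k = Tt$) and are immediate. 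The constant $0.19$ in the definition of $\dominate$ is evidently calibrated so that all four cases close with margin.
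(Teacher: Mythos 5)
Your proposal is correct and takes essentially the same approach as the paper: you note $b_{\ty_k}(\tx_k)=0$ so that $(\nabla\gtt(\x))_k$ is the linear form in $\tx_{k-1},\tx_k,\tx_{k+1}$, split into cases by position within a block, and translate the $\neg\dominate(k)$ bound into the $\tz$-ratio inequality. The paper folds your edge cases $k=1$ and $k=Tt$ into the generic cases via the conventions $\tx_0=0$ and $\tx_{Tt+1}=\tx_{Tt}$, and your contradiction phrasing (assume both ratios $<\tfrac{4}{3}$) is a minor stylistic difference from the paper's direct lower bound, but the arithmetic and case structure are the same.
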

\begin{proof}
    If $\tz_k\notin [\frac{31}{32},\frac{33}{32}]$, then $b_{\ty_k}(\tx_k) =  0$, and $(\nabla \gtt(\x))_k = (\tB\tx)_k +\tx_k$. By $\neg\dominate(k)$, we have $\left|{(\tB\tx)_k +\tx_k}\right|<0.19\left|\tx_k\right|$. We consider three cases:
    \begin{enumerate}
        \item If $k = T+1,2T+1,\cdots, T(t-1)+1$, $(\tB\tx)_k = -\frac{7}{8} \tx_{k-1}+2\tx_k-\tx_{k+1}$, $(\tB\tx)_k + \tx_k = \frac{7}{8} \tx_{k-1}+3\tx_k-\tx_{k+1}$, and $\frac{7}{8}\ty_{k-1} =\ty_k=\ty_{k+1} $. We have
        \begin{equation}
            \begin{aligned}
                3|\tx_k|-\frac{7}{8}|\tx_{k-1}|-|\tx_{k+1}| &\le |(\tB\tx)_k + \tx_k | \\
                &< 0.19|\tx_k|.
            \end{aligned}
        \end{equation}
        Dividing both sides by $\ty_k$, we have
        \begin{equation}
            3|\tz_k|-|\tz_{k-1}|-|\tz_{k+1}|<0.19|\tz_k|.
            \label{equ:notdom}
        \end{equation}
        Thus, we have $|\tz_{k-1}|+\tz_{k+1}>\frac{8}{3}|\tz_k|$, which indicates that $\frac{\tz_{k-1}}{\tz_k}\ge \frac{4}{3}$ or $\frac{\tz_{k+1}}{\tz_k}\ge \frac{4}{3}$.
        \item If $k= T, 2T,\cdots, T(t-1)$, $(\tB\tx)_k+\tx_k = -\tx_{k-1}+\frac{177}{64}\tx_k-\frac{7}{8}\tx_{k+1}$, and $\ty_{k-1}=\ty_k=\frac{8}{7}\ty_{k+1}$. Thus, we have
        \begin{equation}
            \frac{177}{64}|\tx_{k}|-|\tx_{k-1}|-\frac{7}{8}|\tx_{k+1}|<0.19|\tx_{k}|.    
        \end{equation}
        Dividing both sides by $\ty_k$, we have
        \begin{equation}
            \frac{177}{64}|\tz_k|-|\tz_{k-1}|-\frac{49}{64}|\tz_{k+1}|<0.19|\tz_{k}|.
        \end{equation}
        Thus, we have
        \begin{equation}
            |\tz_{k-1}|+\frac{49}{64}|\tz_{k+1}| > \left(\frac{177}{64}-0.19\right)|\tz_{k}|>\frac{4}{3}\cdot \left(1+\frac{49}{64}\right)|\tz_k|.
        \end{equation}
        \item For other $k$, $(\tB\tx)_k+\tx_k = -\tx_{k-1}+3\tx_{k}-\tx_{k+1}$, and $\ty_{k-1}=\ty_k=\ty_{k+1}$. Therefore, we have (by dividing $\ty_k$ to both sides of \eqref{equ:notdom})
        \begin{equation}
            |\tz_{k-1}|+|\tz_{k+1}|>2.81|\tz_{k}|>\frac{8}{3}|\tz_k|.    
        \end{equation}
    \end{enumerate}
\end{proof}

\setcounter{theorem}{\value{app8}}
\begin{lemma}
    If $\tz_{k-1}<\frac{5}{7}$, $\tz_{k}\in[\frac{31}{32}, \frac{33}{32}]$, $\tz_{k+1}\le \frac{33}{32}$, then $\dominate(k)=\true$.

\end{lemma}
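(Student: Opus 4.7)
The plan is to show the gradient ratio $|(\nabla\gtt(\x))_k/\tx_k|$ exceeds $0.19$ by directly evaluating the formula for $(\nabla\gtt(\x))_k$ under the hypothesis $\tz_k\in[\frac{31}{32},\frac{33}{32}]$, which activates the nonlinear piece $b_{\ty_k}$. Since $\tz_k\ge\frac{31}{32}>0$, we have $\tx_k\neq 0$, so we only need the gradient ratio bound. Recall
\[
(\nabla\gtt(\x))_k = (\tB\tx)_k + \tx_k - b_{\ty_k}(\tx_k),
\]
and under the hypothesis the definition of $\by$ gives $b_{\ty_k}(\tx_k)=\ty_k-32|\tx_k-\ty_k|$. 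This explicit form is the key source of the lower bound: the $32$-slope of $\by$ creates a large additive term that cannot be cancelled by the chain couplings once $\tz_{k-1}$ is forced below $\frac{5}{7}$.

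First I would split into two subcases by the sign of $\tx_k-\ty_k$. If $\tz_k\in[\frac{31}{32},1]$, then $b_{\ty_k}(\tx_k)=32\tx_k-31\ty_k$, giving $(\nabla\gtt(\x))_k=(\tB\tx)_k-31\tx_k+31\ty_k$. If $\tz_k\in(1,\frac{33}{32}]$, then $b_{\ty_k}(\tx_k)=33\ty_k-32\tx_k$, giving $(\nabla\gtt(\x))_k=(\tB\tx)_k+33\tx_k-33\ty_k$. In both cases, the absolute value is a positive constant of order $\ty_k$ minus linear combinations of $\tx_{k-1},\tx_k,\tx_{k+1}$ with bounded coefficients.

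Next I would consider the three positional sub-cases for $k$ modulo $T$ exactly as in the proof of Lemma \ref{lem:dom}: the regular case where $\ty_{k-1}=\ty_k=\ty_{k+1}$ and $(\tB\tx)_k=-\tx_{k-1}+2\tx_k-\tx_{k+1}$, and the two boundary cases $k=T,2T,\dots$ and $k=T{+}1,2T{+}1,\dots$ where one of the coupling coefficients is $\frac{7}{8}$ and the neighboring $\ty$ changes by the same factor. In each positional case I would plug in the bounds $\tx_{k-1}<\frac{5}{7}\ty_{k-1}$, $\tx_{k+1}\le\frac{33}{32}\ty_{k+1}$, and $\tx_k\in[\frac{31}{32}\ty_k,\frac{33}{32}\ty_k]$, rescale through $\ty_{k-1}/\ty_k$ and $\ty_{k+1}/\ty_k$, and compute a numerical lower bound. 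For example in the regular subcase with $\tx_k\le\ty_k$, one obtains $|(\nabla\gtt(\x))_k|\ge\bigl(31-\tfrac{5}{7}-29-\tfrac{33}{32}\bigr)\ty_k=\tfrac{57}{224}\ty_k$, and dividing by $|\tx_k|\le\tfrac{33}{32}\ty_k$ yields $\tfrac{57}{231}>0.19$.

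The main obstacle is simply the bookkeeping: six combinations (three positions $\times$ two signs of $\tx_k-\ty_k$) must each clear the $0.19$ threshold after rescaling by the ratios $\ty_{k\pm 1}/\ty_k\in\{1,\frac{7}{8},\frac{8}{7}\}$. Because the ``restoring'' constant is $31\ty_k$ or $33\ty_k$ while the combined loads from the three neighbors stay below approximately $30.75\ty_k$ (the worst being when $\tx_{k-1}$ is as close to $\frac{5}{7}\ty_{k-1}$ as allowed), a uniform slack on the order of $\frac{57}{224}\ty_k$ remains in every case, comfortably beating $0.19\cdot\frac{33}{32}\ty_k\approx 0.196\,\ty_k$. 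The remark in the paper that the ``$+$''/``$-$'' signs can be interchanged cuts the symmetric versions and keeps the case-work finite.
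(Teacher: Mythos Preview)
Your proposal is correct: splitting on the sign of $\tx_k-\ty_k$ and on the position $k\bmod T$, then plugging in the extremal values of $\tx_{k-1},\tx_k,\tx_{k+1}$, does clear the $0.19$ threshold in every one of the six combinations. The worst case you compute, $\tfrac{57}{224}\ty_k$ against $\tfrac{33}{32}\ty_k$, indeed gives a ratio $\approx 0.247>0.19$.

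The paper, however, takes a shorter route. Instead of evaluating $b_{\ty_k}(\tx_k)$ explicitly in the activated region, it observes that $\tx_k-b_{\ty_k}(\tx_k)=v_{\ty_k}'(\tx_k)\ge 0$ for $\tx_k>0$ (immediate from the piecewise formula for $v_y'$), and hence $(\nabla\gtt(\x))_k\ge(\tB\tx)_k$. This discards the nonlinear term altogether and leaves only the linear ratio $(\tB\tx)_k/\tx_k$ to bound, which requires just the three positional cases and no sign split. The resulting margin is thinner---for the generic position one gets $2-\tfrac{5/7}{31/32}-\tfrac{33/32}{31/32}\approx 0.198$, barely over $0.19$---whereas your computation leaves more slack. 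So you trade extra bookkeeping for a more comfortable numerical cushion. One side remark: your claim that ``the $32$-slope of $\by$ \dots\ is the key source of the lower bound'' is not quite right; the paper's argument shows the chain coupling $(\tB\tx)_k$ alone already suffices, with the $b$ term thrown away entirely.
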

\begin{proof}
    For any $y>0$, we have $0\le \by(x)\le y$. Therefore, $(\nabla\gtt(\x))_k\ge(\tB\tx)_k$.
    \begin{enumerate}
        \item If $k = T+1,2T+1,\cdots, T(t-1)+1$, $(\tB\tx)_k = -\frac{7}{8} \tx_{k-1}+2\tx_k-\tx_{k+1}$ and $\frac{7}{8}\ty_{k-1} =\ty_k=\ty_{k+1} $.
        \begin{equation}
            \begin{aligned}
                \frac{(\nabla\gtt(\x))_k}{\tx_k}&\ge \frac{(\tB\tx)_k}{\tx_k}\\
                &= \frac{-\frac{7}{8}\tx_{k-1}+2\tx_k=\tx_{k+1}}{\tx_k}\\
                &\stackrel{(a)}{=}\frac{-\tz_{k-1}+\tz_k-\tz_{k+1}}{\tz_k}\\
                &\stackrel{(b)}{>}-\frac{\sfrac{5}{7}}{\sfrac{31}{32}}+2-\frac{\sfrac{33}{32}}{\sfrac{31}{32}}\\
                &>0.19,
            \end{aligned}
        \end{equation}
        where $(a)$ holds by dividing $\ty_k$ to the numerator and denominator, $(b)$ holds by the assumptions on $\tz_{k-1},\tz_k,\tz_{k+1}$.
        \item If $k= T, 2T,\cdots, T(t-1)$, $(\tB\tx)_k = -\tx_{k-1}+\frac{113}{64}\tx_k-\frac{7}{8}\tx_{k+1}$, and $\ty_{k-1}=\ty_k=\frac{8}{7}\ty_{k+1}$.
        \begin{equation}
            \begin{aligned}
                \frac{(\nabla\gtt(\x))_k}{\tx_k}&\ge \frac{(\tB\tx)_k}{\tx_k}\\
                &= \frac{-\tx_{k-1}+\frac{113}{64}\tx_k-\frac{7}{8}\tx_{k+1}}{\tx_k}\\
                &\stackrel{(a)}{=}\frac{-\tz_{k-1}+\frac{113}{64}\tz_k-\frac{49}{64}\tz_{k+1}}{\tz_k}\\
                &\stackrel{(b)}{>}-\frac{\sfrac{5}{7}}{\sfrac{31}{32}}+\frac{113}{64}-\frac{49}{64}\cdot\frac{\sfrac{33}{32}}{\sfrac{31}{32}}\\
                &>0.19,
            \end{aligned}
        \end{equation}
        where $(a)$ holds by dividing $\ty_k$ to the numerator and denominator, and $(b)$ holds by the assumptions on $\tz_{k-1},\tz_k$, and $\tz_{k+1}$.
        \item For other $k$, $(\tB\tx)_k = -\tx_{k-1}+2\tx_k-\tx_{k+1}$, and $\ty_{k-1}=\ty_k=\ty_{k+1}$. Therefore,
        \begin{equation}
            \begin{aligned}
                \frac{(\nabla\gtt(\x))_k}{\tx_k}&\ge \frac{(\tB\tx)_k}{\tx_k}\\
                &= \frac{-\tx_{k-1}+2\tx_k=\tx_{k+1}}{\tx_k}\\
                &\stackrel{(a)}{=}\frac{-\tz_{k-1}+\tz_k-\tz_{k+1}}{\tz_k}\\
                &\stackrel{(b)}{>}-\frac{\sfrac{5}{7}}{\sfrac{31}{32}}+2-\frac{\sfrac{33}{32}}{\sfrac{31}{32}}\\
                &>0.19,
            \end{aligned}
        \end{equation}
        where $(a)$ holds by dividing $\ty_k$ to the numerator and denominator, and $(b)$ holds by the assumptions on $\tz_{k-1},\tz_k$, and $\tz_{k+1}$.
    \end{enumerate}
\end{proof}

\setcounter{theorem}{\value{app9}}
\begin{lemma}
    If $\tz_{k-2}<\frac{31}{32}$, $\frac{5}{7}\le \tz_{k-1}<\frac{31}{32}$, $\tz_{k}\in [\frac{31}{32}, \frac{33}{32}]$, then $\dominate(k-1)=\true$.
\end{lemma}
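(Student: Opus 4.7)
The plan is to estimate the ratio $(\nabla\gtt(\x))_{k-1}/\tx_{k-1}$ directly and show it exceeds $0.19$, mirroring the structure of the proof of Lemma \ref{lem:into1} but shifted by one index. Since $\tz_{k-1}<\tfrac{31}{32}$, the hypothesis $\tx_{k-1}<\tfrac{31}{32}\ty_{k-1}$ puts us in the first branch of the definition of $b_{\ty_{k-1}}$, so $b_{\ty_{k-1}}(\tx_{k-1})=0$ and hence $(\nabla\gtt(\x))_{k-1}=(\tB\tx)_{k-1}+\tx_{k-1}$, a tridiagonal expression in $\tx_{k-2},\tx_{k-1},\tx_k$ whose coefficients are dictated by where $k-1$ sits in the block structure.

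I would split into the same three sub-cases as before: (A) $k-1\in\{T+1,2T+1,\ldots,T(t-1)+1\}$, giving $(\nabla\gtt(\x))_{k-1}=-\tfrac{7}{8}\tx_{k-2}+3\tx_{k-1}-\tx_k$ with $\tfrac{7}{8}\ty_{k-2}=\ty_{k-1}=\ty_k$; (B) $k-1\in\{T,2T,\ldots,T(t-1)\}$, giving $(\nabla\gtt(\x))_{k-1}=-\tx_{k-2}+\tfrac{177}{64}\tx_{k-1}-\tfrac{7}{8}\tx_k$ with $\ty_{k-2}=\ty_{k-1}=\tfrac{8}{7}\ty_k$; (C) the remaining generic indices, giving $(\nabla\gtt(\x))_{k-1}=-\tx_{k-2}+3\tx_{k-1}-\tx_k$ with $\ty_{k-2}=\ty_{k-1}=\ty_k$. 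In each sub-case, dividing by $\tx_{k-1}=\tz_{k-1}\ty_{k-1}$ and converting $\tx_m/\tx_{k-1}=(\tz_m/\tz_{k-1})(\ty_m/\ty_{k-1})$ turns the ratio into a linear combination of $\tz_{k-2}/\tz_{k-1}$ and $\tz_k/\tz_{k-1}$ with a positive constant term.

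Plugging in the hypotheses $\tz_{k-2}<\tfrac{31}{32}$, $\tz_{k-1}\ge\tfrac{5}{7}$, $\tz_k\le\tfrac{33}{32}$ then gives a uniform lower bound. In cases (A) and (C) the $\tfrac{7}{8}$ and $\tfrac{8}{7}$ factors cancel, leaving
\[
\frac{(\nabla\gtt(\x))_{k-1}}{\tx_{k-1}}>-\frac{31/32}{5/7}+3-\frac{33/32}{5/7}=3-\frac{14}{5}=0.2>0.19,
\]
and in case (B) the analogous computation with $\tfrac{177}{64}$ and $\tfrac{49}{64}$ in place of $3$ and $1$ yields an even larger value. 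In every sub-case the ratio strictly exceeds $0.19$, so $\dominate(k-1)=\true$.

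The only subtlety is the correct bookkeeping of the $\ty$-ratios at block boundaries, which is already carried out in the proof of Lemma \ref{lem:into1}; here I simply reuse that arithmetic with indices shifted by one. There is no genuinely hard step: the bound $0.2$ has been engineered, through the choice of the PL-constant target, to sit just above the $0.19$ threshold in precisely this limiting configuration, so the proof reduces to verifying the three explicit inequalities.
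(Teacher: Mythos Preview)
Your proposal is correct and follows essentially the same approach as the paper: note that $\tz_{k-1}<\tfrac{31}{32}$ forces $b_{\ty_{k-1}}(\tx_{k-1})=0$, expand $(\nabla\gtt(\x))_{k-1}/\tx_{k-1}$ as a tridiagonal expression, convert to $\tz$-ratios using the block structure of $\ty$, and bound the result by $0.2>0.19$ via the hypotheses $\tz_{k-2}<\tfrac{31}{32}$, $\tz_{k-1}\ge\tfrac57$, $\tz_k\le\tfrac{33}{32}$. The paper collapses your cases (A) and (C) into one (since the $\tfrac78$ and $\tfrac87$ factors cancel, as you observe), but the arithmetic is identical.
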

\begin{proof}
    Like the proof of Lemma \ref{lem:into1}, we directly compute $\frac{(\nabla\gtt(\x))_{k-1}}{\tx_{k-1}}$. Because $\tz_{k-1}<\frac{31}{32}$, $b_{\ty_{k-1}}(\tx_{k-1})=0$.
    \begin{equation}
        \begin{aligned}
            \frac{(\nabla\gtt(\tx))_{k-1}}{\tx_{k-1}}&= \frac{(\tB\tx)_{k-1}+\tx_{k-1}}{\tx_{k-1}}\\
            &\stackrel{(a)}{=} -a\frac{\tz_{k-2}}{\tz_{k-1}}+b-c\frac{\tz_{k}}{\tz_{k-1}},
            \label{equ:auxinto1}
        \end{aligned}
    \end{equation}
    where $(a)$ holds by dividing $\ty_k$ to the numerator and denominator. For $k-1=T,2T,\cdots,T(t-1)$, $a=-1$, $b-\frac{177}{64}$ and $c=\frac{49}{64}$. For other $k$, $a=c=1$ and $b=3$. By the assumptions on $\tz_{k-2},\tz_{k-1}$ and $\tz_k$, we have $\frac{\tz_{k-2}}{\tz_{k-1}}<\frac{\sfrac{31}{32}}{\sfrac{5}{7}}$, and $\frac{\tz_k}{\tz_{k-1}}<\frac{\sfrac{33}{32}}{\sfrac{5}{7}}$. Plugging everything into \eqref{equ:auxinto1}, we have $\frac{(\nabla\gtt(\tx))_{k-1}}{\tx_{k-1}} > 0.2$.
\end{proof}

\setcounter{theorem}{\value{appA}}
\begin{lemma}
    If $\tz_n\in[\frac{31}{32},\frac{33}{32}]$, $\neg\dominate(n)$ and $\tz_{n-1}, \tz_{n+1}\le \frac{33}{32}$, there exist $k$ such that $k<n$, $\dominate(k)$ and $\tz_{k}>\frac{5}{7}$.

\end{lemma}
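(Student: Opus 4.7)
My plan is to walk backwards from index $n$, applying the contrapositives of Lemmas \ref{lem:dom}, \ref{lem:into1}, and \ref{lem:into2} in a case analysis to propagate the lower bound $\tz_j > 5/7$ at each backwards step, until I first reach an index $k < n$ with $\dominate(k)$.

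First, I use the contrapositive of Lemma \ref{lem:into1} at $k = n$: since $\tz_n \in [31/32, 33/32]$, $\tz_{n+1} \le 33/32$, and $\neg\dominate(n)$, we obtain $\tz_{n-1} > 5/7$ (a quick inspection of the computation in Lemma \ref{lem:into1} shows the inequality is strict with some numerical slack, so the strict form of its contrapositive holds). Then inductively, at each backward step at position $j$ with $\neg\dominate(j)$, I would case-split on the location of $\tz_j$:
\begin{enumerate}
\item if $\tz_j \in [31/32, 33/32]$, the contrapositive of Lemma \ref{lem:into1} gives $\tz_{j-1} > 5/7$ (requiring $\tz_{j+1} \le 33/32$, which we maintain as an invariant);
\item if $\tz_j \in (5/7, 31/32)$ with $\tz_{j+1} \in [31/32, 33/32]$, the contrapositive of Lemma \ref{lem:into2} gives the stronger conclusion $\tz_{j-1} \ge 31/32$;
\item if $\tz_j > 33/32$, Lemma \ref{lem:dom} forces $\tz_{j-1}/\tz_j \ge 4/3$ or $\tz_{j+1}/\tz_j \ge 4/3$, and since the forward neighbor is already bounded in size by the walk so far, only the backward ratio can hold, giving $\tz_{j-1} \ge (4/3)\tz_j > 11/8 > 5/7$.
\end{enumerate}
The walk halts as soon as we meet a $\dominate$ index; the propagated invariant then furnishes the required bound $\tz_k > 5/7$ at that index.

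The main technical obstacle is the bookkeeping in case (3): we need to track enough about the ``forward'' $\tz$ values along the walk to rule out the forward-direction branch of Lemma \ref{lem:dom}. I anticipate that a secondary invariant of the form ``either $\tz_{j+1} \le 33/32$, or the stretch of case-(3) steps we have traversed forces $\tz_{j+1} \le (3/4)\tz_j$'' is enough, but verifying it requires carefully going through each possible transition between the three cases above. A parallel subtlety is to show the walk cannot reach index $1$ without meeting a $\dominate$ index; here I would use $\tz_0 = 0$ together with the explicit form of $(\nabla \gtt(\x))_1$ at the boundary of the zero-chain to deduce $\dominate(1)$ in the remaining sub-case, thereby guaranteeing termination of the walk and existence of the desired $k$.
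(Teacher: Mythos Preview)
Your plan is sound: once the invariants are tracked, the walk works. The case transitions are $1\to\{1,2,3\}$, $2\to\{1,3\}$, $3\to 3$; in each transition the forward-neighbor constraint required at the next step (namely $\tz_{j+1}\le\tfrac{33}{32}$ for case~1, $\tz_{j+1}\in[\tfrac{31}{32},\tfrac{33}{32}]$ for case~2, and $\tz_{j+1}/\tz_j<\tfrac{4}{3}$ for case~3) is supplied by the step just completed, so the secondary invariant you anticipate does close. Termination at the bottom follows exactly as you say: if the walk reaches $j=1$ in any of the three cases, then $\tz_0=0$ together with the relevant lemma (or Lemma~\ref{lem:dom} itself at $k=1$) forces $\dominate(1)$.

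The paper organizes the same ingredients differently. It does not step through every index: it jumps straight to $m=\max\{m'<n:\tz_{m'}\notin[\tfrac{31}{32},\tfrac{33}{32}]\}$ (which exists since $\tz_0=0$) and splits on $\tz_m$. If $\tz_m<\tfrac57$, Lemma~\ref{lem:into1} at $m+1$ gives $\dominate(m+1)$ and one takes $k=m+1$; otherwise it takes $k=m$ when $\dominate(m)$, and invokes the already-built $\UBindex$ apparatus (a $\Next$-walk) at $m$ when not. Your cases~(1)--(2) amount to walking through the plateau $[m+1,n]$ that the paper skips in one line, and your case~(3) is an explicit unrolling of what the paper packages as a single call to $\UBindex$. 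The underlying lemmas are used identically; your version is more self-contained (no reliance on $\UBindex$, hence no worry about circularity with that definition) at the price of the bookkeeping you anticipated, while the paper's is terser but leans on machinery set up for the main PL estimate.
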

\setcounter{theorem}{\value{temp7}}

\begin{proof}
    Define $m$ = $\argmax_{m'}\left\{\tz_{m'}\notin [\frac{31}{32},\frac{33}{32}]:m'<n \right\}$. If $\tz_{m}>\frac{33}{32}$, let $k$ be $m$ if $\dominate(m)$ and $\UBindex(m)$ if $\neg\dominate(m)$. If $\tz_m\in [\frac{5}{7},\frac{33}{32}]$, let $k$ be $m$ if $\dominate(m)$ and $\UBindex(m)$ if $\neg\dominate(m)$. Finally, if $\tz_m<\frac{5}{7}$, let $k=m+1$.
\end{proof}

\fi

%%%%%%%%%%%%%%%%%%%%%%%%%%%%%%%%%%%%%%%%%%%%%%%%%%%%%%%%%%%%

% Optionally include extra information (complete proofs, additional experiments and plots) in the appendix.
% This section will often be part of the supplemental material.

\end{document}